\newtheorem{thr}{Theorem}[section]
\newtheorem{q}[thr]{Question}
\newtheorem{lem}[thr]{Lemma}
\newtheorem{prop}[thr]{Proposition}
\newtheorem{cor}[thr]{Corollary}
\theoremstyle{definition}
\newtheorem{defi}[thr]{Definition}
\newtheorem*{rem}{Remark}
\newtheorem{examp}[thr]{Example}
\newcommand*{\myproofname}{Proof}
\newcommand*{\sH}{\mathscr{H}}
\let\le\leqslant
\let\ge\geqslant
\let\leq\leqslant
\let\geq\geqslant
\newcommand{\abs}[1]{\left|#1\right|}
\renewcommand*{\phi}{\varphi}
\def\C{\mathcal{C}}
\def\D{\mathcal{D}}
\title{Fractional list packing for layered graphs}
\author{
	Stijn Cambie
 \thanks{Department of Computer Science, KU Leuven Campus Kulak-Kortrijk, 8500 Kortrijk, Belgium. 
 Supported by a FWO grant with grant number 1225224N. Email: \protect\href{mailto:stijn.cambie@hotmail.com}{\protect\nolinkurl{stijn.cambie@hotmail.com}}.}	
	\and
	Wouter Cames van Batenburg%
	\thanks{
 D\'epartement d'Informatique, Universit\'e libre de Bruxelles, Belgium. Supported by the Belgian National Fund for Scientific Research (FNRS).
		Email: \protect\href{mailto:w.p.s.camesvanbatenburg@gmail.com}{\protect\nolinkurl{w.p.s.camesvanbatenburg@gmail.com}}.}}
\date{\today}
\begin{document}
\maketitle
\begin{abstract}
The \emph{fractional list packing number} $\chi_{\ell}^{\bullet}(G)$ of a graph $G$ is a graph invariant that has recently arisen from the study of disjoint list-colourings. It measures how large the lists of a list-assignment $L:V(G)\rightarrow 2^{\mathbb{N}}$ need to be to ensure the existence of a `perfectly balanced' probability distribution on proper $L$-colourings, i.e., such that at every vertex $v$, every colour appears with equal probability $1/|L(v)|$.
In this work we give various bounds on $\chi_{\ell}^{\bullet}(G)$, which admit strengthenings for correspondence and local-degree versions.
As a corollary, we improve theorems on the related notion of flexible list colouring. 
In particular we study Cartesian products and $d$-degenerate graphs, and we prove that $\chi_{\ell}^{\bullet}(G)$ is bounded from above by the pathwidth of $G$ plus one. 
The correspondence analogue of the latter is false for treewidth instead of pathwidth.
\end{abstract}

\section{Introduction}

In classical graph colouring, the goal is to find a vertex-colouring $c:V(G) \rightarrow \mathbb{N}$ of a graph $G$, so that adjacent vertices receive distinct colours. Such a colouring is called \emph{proper}.
If there exists a proper colouring for which each vertex maps to an element of $[k]:=\{1,\ldots,k\}$, then $G$ is \emph{$k$-colourable}, and the \emph{chromatic number} $\chi(G)$ is the smallest $k$ for which this is possible. 
A well-established generalisation is \emph{list colouring}, introduced in the seventies independently by~\cite{Viz76} and~\cite{ERT80}. In this setting, each vertex $v$ is assigned a private list $L(v) \subseteq \mathbb{N}$, and the goal is again to find a proper colouring $c:V(G) \rightarrow \mathbb{N}$, but now with the more local condition that $c(v)\in L(v)$ for every $v$. Such a colouring is an \emph{$L$-colouring}. A list-assignment $L$ is \emph{$k$-fold} if each list $L(v)$ has size $k$, and a graph $G$ is \emph{$k$-choosable} if it is $L$-colourable for every $k$-fold $L$. The \emph{list-chromatic number} $\chi_{\ell}(G)$ (sometimes called \emph{choosability}) of $G$ is the smallest $k$ such that $G$ is $k$-choosable. Since the list-assignment $L$ that has the same list $[k]$ at each vertex needs to admit an $L$-colouring, it follows that $k$-choosable implies $k$-colourable, and hence $\chi(G)\leq \chi_{\ell}(G)$. Perhaps surprisingly, the opposite inequality is very false, because there exist graphs with $\chi(G)=2$ and $\chi_{\ell}(G)$ arbitrarily large. 
Besides being interesting in its own right, a major benefit of list colouring has been its versatility. Since not all lists need to have the same size, they allow for efficient `algorithmic' induction proofs that would not be possible with classic colouring, a good example being Thomassen's proof~\cite{Thomassen94} that planar graphs are $5$-choosable.

In the 2010s, a similar development took place, when Dvo\v{r}\'{a}k and Postle introduced~\cite{DvPo18} the \emph{correspondence chromatic number} $\chi_c(G)$, which helped them prove results on planar graphs that were out of reach with classical and list colouring. While list colouring had liberated the lists from all being the same, correspondence colouring decentralises the notion of being `proper', by allowing edges to choose their own pairs of forbidden adjacent colours. We refer to section~\ref{sec:defnot} for the precise definition, but for now it suffices to know that $\chi_{\ell}(G)\leq \chi_c(G)$ always.\\

Everything discussed so far concerns finding \emph{at least one} colouring of some type. It is natural to ask for more. For instance simply try to count the number of colourings; if there exists one, often there exist many~\cite{Thomassen07,Thomassen09, Rosenfeld20}.

While helpful, there is a risk that this sketches us a false image of the graph, as it could contain a tiny part that accomodates many colourings while the majority of the graph remains rigid. An alternative fruitful research direction~\cite{Tuza97survey, ALBERTSON98, AM99, DL17, EGHKPS18} 
has been to see under what circumstances one can precolour some vertices arbitrarily, and still be able to extend it to a proper colouring. This already offers a more holistic view of the graph. One downside is that not every vertex can be arbitrarily precoloured as, for instance, colouring all vertices red is not extendable to a proper colouring.
This led Dvo\v{r}\'{a}k, Norin and Postle~\cite{DNP19} to introduce the notion of \emph{(weighted) flexibility}, which allows all vertices to be precoloured (yielding a colouring that is not necessarily proper), but then as a trade-off merely imposes that for a large fraction of the vertices their initial colour survives in the final proper colouring.

Formally, a graph $G$ is called \emph{$\epsilon$-flexible} with respect to a list-assignment $L$ if for every vertex subset $D\subseteq V(G)$ and every collection of `precolouring' requests $\{r(v) \in L(v) \mid v\in D\}$, there is a proper $L$-colouring $c$ of $G$ such that $c(v)=r(v)$ for at least $\epsilon \cdot |D|$ vertices of $D$.

In the paper where they introduced it, Dvo\v{r}\'{a}k, Norin and Postle already noted that this somewhat unwieldy notion is implied by an elegant stronger probabilistic property. A graph is \emph{weighted $\epsilon$-flexible} with respect to a list-assignment $L$ if there exists a probability distribution on $L$-colourings $c$ such that for every vertex $v$ and every colour $x \in L(v)$, the probability that $c$ assigns colour $x$ to $v$ is at least $\epsilon$. A graph is (\emph{weighted}) \emph{$\epsilon$-flexibly $k$-choosable} if it is (weighted) $\epsilon$-flexible with respect to every $k$-fold list-assignment $L$. 

\begin{lem}[\cite{DNP19}]
Every weighted $\epsilon$-flexibly $k$-choosable graph is $\epsilon$-flexibly $k$-choosable. The converse is false.
\end{lem}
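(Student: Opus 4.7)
The proof splits into two parts. For the forward implication (weighted implies non-weighted), the plan is a one-line linearity-of-expectation argument. Fix a $k$-fold list-assignment $L$, a set $D\subseteq V(G)$, and a request $r$ with $r(v)\in L(v)$ for all $v\in D$. By hypothesis there is a probability distribution $\mu$ on proper $L$-colourings such that $\Pr_{c\sim\mu}[c(v)=r(v)]\ge \epsilon$ for every $v\in D$. Linearity of expectation then yields
\[
\EE_{c\sim\mu}\bigl|\{v\in D:c(v)=r(v)\}\bigr|=\sum_{v\in D}\Pr[c(v)=r(v)]\ge \epsilon|D|,
\]
so some colouring in the support of $\mu$ matches at least $\epsilon|D|$ requests, which is precisely $\epsilon$-flexibility with respect to $L$. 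Since $L$ was arbitrary, this proves $\epsilon$-flexible $k$-choosability.

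For the converse, the plan is to exhibit a small explicit counterexample. The structural obstruction to weighted $\epsilon$-flexibility is that at any single vertex $v$ the marginal probabilities over colours in $L(v)$ sum to $1$, so some colour in $L(v)$ must receive probability at most $1/|L(v)|$; weighted $\epsilon$-flexible $k$-choosability therefore fails trivially as soon as $\epsilon>1/k$. Standard flexibility has no such averaging constraint, because each request is allowed its own tailored colouring. I would therefore take $k=2$, $\epsilon=1$, and the graph $G=K_1$ equipped with the $2$-fold list $L(v)=\{1,2\}$. Every request $r$ on $D\subseteq\{v\}$ is fully satisfied by the colouring that sends $v$ to $r(v)$ (or any colouring if $D=\varnothing$), so $G$ is $1$-flexibly $2$-choosable. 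But the averaging obstruction above rules out weighted $1$-flexibility.

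The forward direction is essentially routine once one has the right viewpoint; the only conceptual content is noticing that the per-vertex probability bound plugs directly into linearity of expectation over $D$. The only genuinely subtle point is for the counterexample: one must recognise that weighted flexibility demands a \emph{single} joint distribution witnessing all possible requests simultaneously, whereas plain flexibility lets the proper colouring depend on the request at hand, and this asymmetry already bites at the level of a single vertex. Once that is clear, the one-vertex example suffices and no real obstacle remains.
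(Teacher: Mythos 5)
The paper does not actually prove this lemma---it is quoted from \cite{DNP19} without proof---so there is no in-paper argument to compare yours against; I can only assess your attempt on its own terms. Your forward direction is correct and is the standard argument: linearity of expectation over $D$ turns the pointwise guarantee $\Pr(c(v)=r(v))\ge\epsilon$ into an expected $\epsilon|D|$ satisfied requests, and some colouring in the support of the distribution must achieve the expectation. No issues there.

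The converse direction is where I would push back. Your counterexample ($K_1$, $k=2$, $\epsilon=1$) satisfies the most literal reading of ``the converse is false'', but it lives entirely in the regime $\epsilon>1/k$, where \emph{no} graph whatsoever is weighted $\epsilon$-flexibly $k$-choosable: the marginal probabilities of the $k$ colours at any single vertex sum to $1$, so one of them is at most $1/k$. The paper makes exactly this observation in the sentence immediately following the lemma, as a separate triviality, which indicates that the authors (and \cite{DNP19}) intend the non-implication to carry more content---namely that plain flexibility can strictly exceed weighted flexibility even in the meaningful regime $\epsilon\le 1/k$, where both properties are attainable and where the rest of the paper operates. Your example says nothing about any particular graph failing to admit a balanced distribution; it only re-derives the vacuous upper bound $\epsilon\le 1/k$. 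A genuine separation requires exhibiting $G$, $k$ and $\epsilon\le 1/k$ such that $G$ is $\epsilon$-flexibly but not weighted $\epsilon$-flexibly $k$-choosable. Your write-up correctly identifies the relevant asymmetry (one joint distribution versus a request-dependent colouring) but does not exploit it beyond the trivial averaging constraint, so I would count this as a gap relative to the intended content of the cited statement, even though it is defensible under a strictly literal reading.
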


Clearly, the larger the value of $\epsilon$, the stronger the property.
However, it is not possible to be weighted $\epsilon$-flexibly $k$-choosable for a value $\epsilon> \frac{1}{k}$, for otherwise the expected number of colours at a given vertex would be larger than one. Therefore the holy grail is to prove that a graph is weighted $\frac{1}{k}$-flexibly $k$-choosable. The smallest integer $k$ for which this holds is the \emph{fractional list packing number}, denoted $\chi_{\ell}^{\bullet}(G)$. The reason for the seemingly unrelated name is that it has independently arisen~\cite{CCDK23} from the study of disjoint list-colourings~\cite{CCDK21,CCZ23,M23,CS-R24+, CH23, CDKH24, CT24, KM24, KMMP22} and the associated \emph{list packing number} $\chi_{\ell}^{\star}(G)$. The latter is easiest to interpret as the minimum size $k$ of the lists such that there exist $k$ $L$-colourings that simultaneously partition each list, but it can equivalently be defined in terms of the chromatic numbers of certain auxiliary \emph{cover graphs} of $G$. 
In a similar manner, $\chi_{\ell}^{\bullet}(G)$ is determined by the fractional chromatic number of those same cover graphs so that $\chi(G)\leq \chi_{\ell}(G)\leq \chi_{\ell}^{\bullet}(G)\leq \chi_{\ell}^{\star}(G)$ holds for every graph $G$. We refer the reader to~\cite{CCDK23} for formal details about this connection. The main thing to realise here is that $\chi_{\ell}^{\bullet}(G)$ measures a robust and balanced list-chromatic structure of the graph.
Indeed, $\chi_{\ell}^{\bullet}(G) \leq k$ if and only if: for every list-assignment $L$ with every list of size at least $\lfloor k \rfloor$ there exists a probability distribution on the $L$-colourings of $G$ such that at \emph{every} vertex, \emph{every} colour from its list appears with the \emph{same} probability $\frac{1}{|L(v)|}$. 
By taking correspondence colouring instead of list colouring in the definition (see Section~\ref{sec:defnot} for the details), one can also define the \emph{correspondence packing number} $\chi_{c}^{\star}(G)$ and the \emph{fractional correspondence packing number} $\chi_c^{\bullet}(G)$, which are more convenient to work with. Every graph $G$ satisfies $\chi_c(G)\leq \chi_c^{\bullet}(G) \leq \chi_c^{\star}(G)$ and $\chi_{\ell}^{\bullet}(G)\leq \chi_c^{\bullet}(G)$. Hence the main focus in this work will be on upper bounding $\chi_c^{\bullet}(G)$, and we will only refer to $\chi_c^{\star}(G)$ to illustrate the challenges and strength of our results. For the reader who quickly skims through this introduction, we emphasise:
$$\text{If }\chi_c^{\bullet}(G)\leq k \text{ for some integer }k, \text{ then } G \text{ is weighted }\frac{1}{k}\text{-flexibly } k\text{-choosable.}$$

Having introduced the colouring numbers of interest, we now turn to the layered types of graphs for which we wish to bound them, and why. A graph $G$ is \emph{$d$-degenerate} if there exists an ordering of the vertices $v_1,v_2,\ldots, v_n$ such that each $v_i$ has at most $d$ neighbours $v_j$ with $j>i.$ This ordering represents an ordered layering of the graph (each layer consisting of one vertex) which is useful because it enables a greedy algorithmic colouring procedure. Deleting the vertex $v_1$ with lowest index, by induction one can find a proper $(d+1)$-colouring of $G-v_1$, which we can then extend to a proper $(d+1)$-colouring of $G$ because at least one colour does not appear on the neighbours of $v_1$. The simple procedure also works for list colouring and correspondence colouring, and is the canonical reason why $\chi, \chi_{\ell}$ and $\chi_{c}$ are all bounded by one plus the \emph{maximum degree} $\Delta(G)$.
However, this algorithmic intuition fails for the (fractional) packing numbers! Here are a few examples. 
\begin{enumerate}[(i)]
 \item Complete graphs $K_n$ on an odd number of vertices $n$ satisfy $\chi_c^{\star}\geq n+1$. Yuster~\cite{Yus21} amalgamated a conjecture going back to various authors including Fischer, K\"uhn, Osthus and Catlin, that this is optimal. The state-of-the-art upper bound is $(1.78 +o(1)) \cdot n$.
 \item For all $d\geq 1$, there are bipartite $d$-degenerate graphs with $\chi_c^{\star}=2d$. There is no graph with $\chi_c^{\star}=3$. See~\cite{CCDK21, CH23}.
 \item For all $d\geq 2$ there are $d$-degenerate graphs with $\chi_{\ell}^{\bullet}\geq d+2$. Both for $\chi_{\ell}^{\bullet}$ and $\chi_c^{\bullet}$ the optimal upper bound in terms of $d$ is unknown; it could be anywhere between $d+2$ and $2d$. See~\cite{CCDK23}.
\end{enumerate}
These examples illustrate that deleting a single vertex (or edge) could reduce $\chi_c^{\star}, \chi_c^{\bullet}, \chi_{\ell}^{\bullet}$ by \emph{more than one}, and a priori it is not obvious when and where such jumps may appear. It makes an algorithmic approach especially daunting.\\

While a degeneracy ordering is thus not universally adequate for (fractional) packing numbers, in this work we identify a type of layering that does always work for $\chi_{c}^{\bullet}$. It maintains the possibility to construct the fractional packing algorithmically along the layers. Concretely: building upon our previous work~\cite{CCZ23} we observe that it is sufficient to subdivide the graph into an ordered collection of layers such that each layer induces a graph with lower value of $\chi_c^{\bullet}$, and such that each vertex has at most one neighbour in the union of the previous layers. We make this precise in~\cref{lem:chicwidth} via the concept of \emph{$\chi_c^{\bullet}$-width}. 
From this Lemma we immediately obtain a number of tight corollaries for graphs that naturally have a layered structure, including bounded-degree graphs, planar graphs, \emph{Cartesian products}, as well as graphs of bounded \emph{treedepth}.\\

Treedepth (denoted $\mathrm{td}$), treewidth (denoted $\mathrm{tw}$) and pathwidth (denoted $\mathrm{pw}$) are part of a suit of graph invariants that measure how close a graph is to being a tree (in the former two cases) or a path (in the third case). We refer to~\cref{sec:defnot} for their precise definitions. They feature prominently in the algorithmic literature because combinatorial problems that are NP-hard in general tend to be in P when restricted to graphs of bounded $\mathrm{td}, \mathrm{tw}$ or $\mathrm{pw}$. See e.g.~\cite{Courcelle90, Noble98, BST23}.
This is essentially due to the fact that they can be decomposed into small layers that are structured in a tree-like fashion. The relevant problem can be solved efficiently on each layer, and the tree-like structure allows for efficient combination of these solutions.
It is well-known that every graph satisfies 
$$\mathrm{tw} \leq \mathrm{pw} \leq \mathrm{td} -1.$$ 
Moreover, $d\leq \mathrm{tw}$ for every $d$-degenerate graph. As alluded to above, finding the optimal bounds on $\chi_{\ell}^{\bullet}$ or $\chi_c^{\bullet}$ in terms of $d$ is a major challenge and defies the greedy procedure; this motivated us to prove $\chi_c^{\bullet}\leq \mathrm{td} $ instead. 
Having succeeded in that, we then went one step further and derived the strengthening $\chi_c^{\bullet} \leq \mathrm{pw} +1$.
In~\cref{sec:conclusion} we briefly discuss why the situation for treewidth is different.
The state-of-the-art there is essentially the same as for degeneracy: there exist graphs with $\chi_c^{\bullet}= \mathrm{tw}+2$, but in general we only know it is $\leq 2\cdot \mathrm{tw}$.

\section{Main results}
Bradshaw, Masa\v{r}\'{\i}k and Stacho proved:
\begin{thr}[\cite{BMS22}]
Let $G$ be a graph of treedepth $k$. Then $G$ is $\frac{1}{k}$-flexibly $k$-choosable, and hence weighted $\frac{1}{k^2}$-flexibly $k$-choosable. 
\end{thr}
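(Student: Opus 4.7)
The plan is to exploit the layered structure given by a depth-$k$ elimination forest $T$ of $G$: a rooted forest on $V(G)$ of depth at most $k$ such that every edge of $G$ joins an ancestor-descendant pair in $T$. Writing $V_1,\ldots,V_k$ for the partition of $V(G)$ into the levels of $T$, each $V_i$ is independent in $G$ (same-level vertices are $T$-incomparable, hence non-adjacent in $G$), and every vertex has at most $k-1$ ancestor-neighbours in $T$.

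Given a $k$-fold list-assignment $L$, a set $D\subseteq V(G)$ and a request function $r$, I first pigeonhole an index $i^\ast\in[k]$ with $|D\cap V_{i^\ast}|\geq |D|/k$. The target is a proper $L$-colouring $c$ of $G$ satisfying $c(u)=r(u)$ for as many $u\in D\cap V_{i^\ast}$ as possible, built by top-down processing of $T$. At each level-$j$ ancestor $a$ (with $j<i^\ast$) of some vertex in $D\cap V_{i^\ast}$, I would pick $c(a)\in L(a)$ avoiding both the at most $j-1$ colours used on $a$'s own ancestors \emph{and} (to the extent possible) the set
\[
R_a \;:=\; \bigl\{r(u): u\in D\cap V_{i^\ast}\text{ is a $T$-descendant of }a\bigr\}.
\]
Then at level $i^\ast$, for each $u\in D\cap V_{i^\ast}$ whose ancestors all avoided $r(u)$ I set $c(u):=r(u)$, and the remaining vertices of $V_{i^\ast}, V_{i^\ast+1},\ldots,V_k$ are coloured greedily top-down, each having at most $k-1$ coloured ancestor-neighbours and hence at least one legal choice.

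The main obstacle is that a level-$j$ ancestor $a$ with $|R_a|+j-1\geq k$ cannot simultaneously avoid its ancestors and all of $R_a$, so it is forced to take a colour in $R_a$ and block at least one descendant request. My plan to handle this is to pick, at each stuck $a$, the colour in $L(a)\setminus\{c(\mathrm{anc}(a))\}$ that is least popular among $a$'s descendant requests in $D\cap V_{i^\ast}$, and then prove via a charging argument that the total number of blocked requests is at most $(1-1/k)|D\cap V_{i^\ast}|$, yielding at least $|D|/k^2$ satisfied requests after the pigeonhole. The delicate combinatorial step is to lift the bound from $|D|/k^2$ to $|D|/k$, which I would do by averaging over the choice of $i^\ast$ or via a simultaneous Hall-type argument across all levels simultaneously; this is the point at which treedepth (rather than mere degeneracy) is essential, since each vertex in $T$ has at most $k-1$ ancestors regardless of how many descendants it must accommodate.

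The weighted $1/k^2$-flexibility is then obtained by inserting a uniform random choice into the same construction: sampling the target pair $(v_0,x_0)$ uniformly (equivalently, applying the unweighted guarantee to a random singleton request) absorbs the extra $1/k$ loss that arises in the unweighted-to-weighted reduction, giving $\Pr[c(v)=x]\geq 1/k^2$ for every vertex $v$ and every $x\in L(v)$.
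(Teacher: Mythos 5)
Your proposal has a genuine gap at its central step. For context: the paper does not prove this statement by a direct flexibility argument at all; it is quoted from \cite{BMS22} and then superseded by \cref{thm:treedepth}, which the paper obtains by peeling off the root of the Tr\'emaux tree one vertex at a time (every remaining vertex has at most one neighbour in the deleted layer, the remainder has treedepth $k-1$, and \cref{lem:chicwidth} together with \cref{lem:technicalfractionallemma} does the work). That yields the stronger weighted $\frac{1}{k}$-flexibility directly, with no per-level pigeonholing.

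The gap in your argument is the lift from $|D|/k^2$ to $|D|/k$. After pigeonholing a level $i^{\ast}$ with $|D\cap V_{i^{\ast}}|\geq |D|/k$, you concede that a level-$j$ ancestor $a$ may be forced into $R_a$, and your charging bound (itself only asserted) would at best leave $\tfrac{1}{k}|D\cap V_{i^{\ast}}|\geq |D|/k^2$ requests satisfied --- that proves $\tfrac{1}{k^2}$-flexibility, not the claimed $\tfrac{1}{k}$-flexibility. Neither proposed fix closes this. Satisfying \emph{all} requests at level $i^{\ast}$ is impossible in general: take a root adjacent to $k$ level-two vertices of $D$ whose requests exhaust the root's list; whichever colour the root receives blocks one request. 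Averaging over $i^{\ast}$ produces $k$ different colourings, one per level, with no mechanism for merging them into a single colouring satisfying $|D|/k$ requests; the ``simultaneous Hall-type argument'' you defer to is precisely the missing content of the theorem. The weighted conclusion is likewise unsubstantiated: applying the singleton guarantee to a uniformly sampled pair $(v_0,x_0)$ gives each fixed pair probability only $\tfrac{1}{k\,|V(G)|}$ of being the sampled target, so the resulting distribution certifies nothing near $\tfrac{1}{k^2}$; the genuine unweighted-to-weighted reduction is a minimax/LP-duality argument that your sketch does not contain.
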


We improve upon this via an application of~\cref{lem:chicwidth}, which is our general layering lemma. We obtain: 
\begin{thr}\label{thm:treedepth}
Let $G$ be a graph of treedepth $k$. Then $\chi_c^{\bullet}(G)\leq k$, and hence $G$ is weighted $\frac{1}{k}$-flexibly $k$-choosable. 
\end{thr}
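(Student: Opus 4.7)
The plan is to proceed by induction on the treedepth $k$, combined with a single application of \cref{lem:chicwidth} to a two-layer partition. The base case $k=1$ is immediate: a graph of treedepth $1$ is edgeless, so $\chi_c^{\bullet}(G)=1$. The resulting bound on $\chi_c^{\bullet}(G)$ then automatically yields the ``weighted $\tfrac{1}{k}$-flexibly $k$-choosable'' conclusion via the implication emphasised at the end of the introduction.

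For the inductive step, I would take $G$ of treedepth $k\geq 2$, witnessed by a rooted elimination forest $F$ of depth $k$ in which every edge of $G$ connects an ancestor-descendant pair. Let $R\subseteq V(G)$ be the set of roots of $F$. Three observations will suffice. First, $R$ is an independent set of $G$, since distinct roots lie in distinct trees of $F$ and are therefore incomparable in the ancestor-descendant order; in particular $\chi_c^{\bullet}(G[R])=1\leq k-1$. Second, the forest $F-R$ has depth $k-1$ and still witnesses every remaining edge as ancestor-descendant, so $G-R$ has treedepth at most $k-1$ and the inductive hypothesis gives $\chi_c^{\bullet}(G-R)\leq k-1$. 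Third, each $v\in V(G)\setminus R$ lies in a unique tree of $F$ and has a unique ancestor in $R$, namely the root of that tree; since all edges of $G$ incident to $v$ go to ancestors or descendants of $v$, it follows that $v$ has at most one neighbour in $R$.

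Ordering the layers as $L_1=R$ and $L_2=V(G)\setminus R$, the first two observations give $\chi_c^{\bullet}(G[L_i])\leq k-1$ for $i=1,2$, and the third ensures that every vertex of $L_2$ has at most one neighbour in $L_1$ (with the corresponding condition trivially satisfied on $L_1$). Thus \cref{lem:chicwidth} delivers $\chi_c^{\bullet}(G)\leq k$.

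The main obstacle is not any hidden computation but rather identifying a layering that simultaneously drops $\chi_c^{\bullet}$ by at least one and enforces the one-neighbour condition; peeling the roots off $F$ achieves both at once. The choice is tight in the sense that removing fewer vertices typically would not lower the treedepth below $k$, while removing more than the roots would generically break the one-neighbour property, since a non-root vertex may have many descendants as neighbours.
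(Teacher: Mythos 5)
Your proof is correct and follows essentially the same route as the paper: peel off the top level of the treedepth decomposition as the first layer, apply the induction hypothesis to the rest, and invoke \cref{lem:chicwidth}. The only cosmetic difference is that you remove all roots of the elimination forest at once, whereas the paper first reduces to the connected case and removes the single root; both are valid.
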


Using a more specialised approach, we were then able to strengthen~\cref{thm:treedepth} to pathwidth. Recall that for every graph, its pathwidth is at most its treedepth minus one.
\begin{thr}\label{thm:pathwidth}
Let $G$ be a graph of pathwidth $k$. Then $\chi_c^{\bullet}(G)\leq k+1$.
\end{thr}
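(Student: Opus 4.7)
The plan is to induct on $|V(G)|$, using a nice path decomposition of $G$ of width $k$ to peel off the last-introduced vertex and extend a balanced correspondence distribution one vertex at a time.

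Fix a nice path decomposition $(B_1,\ldots,B_m)$ of $G$ of width $k$, and let $v$ be the vertex introduced last, so that the interval of $v$ in the decomposition is contained in the last bag $B_m$. Then $N_G(v)\subseteq B_m\setminus\{v\}$ has size at most $k$, and $G' := G-v$ inherits pathwidth at most $k$. Given any correspondence cover $(L,M)$ of $G$ with $|L(w)|=k+1$ for every $w$, I would apply the inductive hypothesis to $G'$ with the restricted cover to obtain a probability distribution $\mu'$ on proper correspondence $L$-colourings of $G'$ with $\Pr_{c'\sim\mu'}[c'(w)=x]=1/(k+1)$ for every $w$ and $x$. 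The base case $|V(G)|=1$ is immediate.

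The crux of the argument is extending $\mu'$ to a balanced distribution on $G$. For each $c'$ in the support of $\mu'$, let $A_{c'}\subseteq L(v)$ be the colours at $v$ not blocked by $c'$ through $M_{vw}$ for $w\in N(v)$; since matchings are injective and $|N(v)|\le k<|L(v)|$, we have $|A_{c'}|\ge 1$. Finding, for each $c'$, a distribution $p(\cdot\mid c')$ supported on $A_{c'}$ with
\[
\sum_{c'}\mu'(c')\,p(x\mid c') \;=\; \tfrac{1}{k+1} \qquad (x\in L(v))
\]
is a transportation LP; by LP duality (the fractional-matching form of Hall's theorem), a solution exists if and only if
\[
\Pr_{c'\sim\mu'}\bigl[A_{c'}\cap S\neq\varnothing\bigr] \;\ge\; \tfrac{|S|}{k+1} \qquad \text{for every } S\subseteq L(v).
\]
Balance of $\mu'$ alone yields this Hall-type inequality for $|S|=1$ via a union bound over the at most $k$ neighbours of $v$. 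The cases $|S|>1$ constrain higher-order correlations of $\mu'$ on $N(v)$ and are not controlled by first-moment balance; this is the main obstacle.

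To overcome this obstacle I would strengthen the inductive hypothesis: rather than requiring only balance, I would require $\mu'$ to carry the symmetric correlation structure produced by the layering construction underlying~\cref{lem:chicwidth} and the ideas of~\cite{CCZ23}, so that the marginal of $\mu'$ on the separator $B_m\setminus\{v\}$ is (close to) a uniform mixture over transversal independent sets of the cover graph restricted to that separator. Under this strengthened hypothesis the Hall condition for $v$ reduces to an independent-set counting inequality inside the $(k{+}2)$-fibre cover graph on $B_m$, which is tractable from the fibre-clique structure combined with $|N(v)|\le k$ and $|L(v)|=k+1$. The linearity of pathwidth is essential: at each inductive step only one separator is involved, of size at most $k+1$, so the strengthened hypothesis can be preserved one vertex at a time. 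This is exactly the feature missing for treewidth --- there two subtrees must be glued across a common separator and the corresponding marginals need not carry compatible symmetric structures --- which is why the analogue of~\cref{thm:pathwidth} for treewidth is unavailable (see~\cref{sec:conclusion}).
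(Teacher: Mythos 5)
Your setup is sound and you correctly isolate the real difficulty: extending a balanced distribution $\mu'$ on $G-v$ to $v$ is a transportation problem whose feasibility is governed by the Hall-type condition $\Pr_{c'}[A_{c'}\cap S\neq\emptyset]\geq |S|/(k+1)$, and first-moment balance only handles $|S|=1$. The gap is in your proposed repair. Strengthening the induction to ``the marginal of $\mu'$ on the separator is (close to) a uniform mixture over independent transversals of the cover restricted to that separator'' is not a property you can maintain. On a $2$-subset $\{u,w\}$ of a bag, uniformity over transversals of the restricted cover is exactly the statement that the edge $uw$ is \emph{balanced} (each of the $q(q-1)$ admissible pairs appears with equal weight), and the remark at the end of \cref{sec:pw+1} exhibits, for every $q\geq 3$, a $q$-fold cover of the triangle (identity matchings on two edges, a cyclic shift on the third) for which no collection of colourings is balanced on all three edges simultaneously. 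Since consecutive bags of a path decomposition overlap in $k$ vertices and the ``next'' separator rotates by one vertex at each step, demanding uniform marginals on each separator in turn forces balance in several directions inside a single clique, which is precisely what that counterexample rules out. The hedge ``close to'' does not save you: the Hall inequality can hold with equality (already for $k=2$ one computes $\Pr[A_{c'}\cap S=\emptyset]=|S^c|/(k+1)$ for some $2$-sets $S$), so there is no slack to absorb an approximate invariant. Finally, even granting your strengthened hypothesis, the claim that the Hall condition then ``reduces to a tractable independent-set counting inequality'' is asserted, not proved.

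The paper's proof supplies exactly the missing invariant. It first reduces to $p$-caterpillars and covers with full matchings, then builds an explicit multiset of $(p+1)!$ colourings maintaining that each active clique $A_i$ is \emph{fully balanced} with respect to one carefully chosen order $\overrightarrow{O_r}$ --- the order in which vertices \emph{leave} the active set --- meaning every suffix $\{s_j,\ldots,s_{|S|}\}$ of the ordered clique is balanced, but no claim is made about other subsets. The point of that order is that the vertex about to be discarded is the \emph{lowest} element, so discarding it leaves the remaining $t$-clique-separator fully balanced, and \cref{lem:extensioncolouringswithin_Kp+1} then extends the colourings to the new vertex while inserting it at the correct position in the order. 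In other words, balance is propagated ``in only one direction,'' chosen to match the linear structure of the decomposition; this is the idea your sketch is missing, and it is also why the argument does not extend to treewidth.
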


Both Theorems~\ref{thm:treedepth} and~\ref{thm:pathwidth} are sharp due to e.g. the clique $K_k$ and complete bipartite graphs $K_{k-1,t}$, for sufficiently large $t$. Both graphs have treedepth $k$ and pathwidth $k-1$ and $\chi_{\ell}=\chi_c^{\bullet}=k$. In contrast, nor~\cref{thm:treedepth} nor~\cref{thm:pathwidth} can be extended to the correspondence packing number, as $\chi_c^{\star}(K_{k-1,t})=2k-2$ for all $t$ sufficiently large~\cite[Cor. 34]{CCDK21}, a multiplicative factor two higher! \\

Another canonical type of layered graphs are those formed by the Cartesian product $G_1 \square G_2$ of two graphs $G_1, G_2$. It is easy to see~\cite{sabidussi57} that $\chi(G_1 \square G_2)=\max(\chi(G_1),\chi(G_2))$. However, Borowiecki, Jendrol, Kr\'al and Mi\v{s}kuf showed that this does not generalise to list colouring. They proved the following optimal result, which was later extended to correspondence colouring by Kaul, Mudrock, Sharma and Stratton~\cite{KMSS23}:
\begin{thr}[\cite{BJKM06}]\label{thm:cartesianproductList}
Let $G_1$ and $G_2$ be graphs, the latter being $d$-degenerate. Then their Cartesian product satisfies
$$\chi_{\ell}(G_1 \square G_2) \leq \chi_{\ell}(G_1)+d.$$
Moreover, for every graph $G_1$ and every value of $d$, some $d$-degenerate graph $G_2$ attains the bound.
\end{thr}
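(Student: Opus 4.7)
My plan is to split the proof into the upper bound and the matching tightness claim, treating the former as a layer-wise greedy argument along a degeneracy ordering of $G_2$ and the latter as an extremal construction on a complete bipartite $G_2$.

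\emph{Upper bound.} I would process the copies of $G_1$ sitting inside $G_1 \square G_2$ in reverse degeneracy order. Let $L$ be any $(\chi_\ell(G_1)+d)$-fold list-assignment on $G_1 \square G_2$, and let $w_1,\dots,w_n$ be a $d$-degeneracy ordering of $G_2$, i.e.\ each $w_i$ has at most $d$ neighbours $w_j$ with $j>i$. For each $w_j$ the $G_2$-fibre $G_1^{(w_j)} := \{(v,w_j) : v \in V(G_1)\}$ induces a copy of $G_1$. I would colour the fibres in reverse order $G_1^{(w_n)}, G_1^{(w_{n-1})}, \dots, G_1^{(w_1)}$. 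When it is the turn of $G_1^{(w_i)}$, each vertex $(v,w_i)$ has at most $d$ already-coloured neighbours, namely one per edge $w_iw_j$ of $G_2$ with $j>i$. Deleting those colours from $L((v,w_i))$ leaves a sublist $L'((v,w_i))$ of size at least $\chi_\ell(G_1)$. Since $G_1^{(w_i)}\cong G_1$, the $\chi_\ell(G_1)$-fold sublist-assignment $L'$ admits a proper colouring by definition of $\chi_\ell(G_1)$, and this extends the partial colouring. Iterating down to $w_1$ produces a proper $L$-colouring of $G_1\square G_2$.

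\emph{Tightness.} Given $G_1$ and $d$, set $k := \chi_\ell(G_1)$ and fix a bad $(k-1)$-fold list-assignment $L^*$ on $G_1$ witnessing $\chi_\ell(G_1)\geq k$. The plan is to take $G_2 := K_{d,N}$ with $N$ to be chosen large (which is $d$-degenerate), name its sides $\{a_1,\dots,a_d\}$ and $\{b_1,\dots,b_N\}$, and design a $(k+d-1)$-fold list-assignment $L$ on $G_1\square K_{d,N}$ that admits no proper $L$-colouring. On the small-side fibre $G_1^{(a_i)}$ I would use lists of the form $L^*(v)$ augmented by carefully chosen reserve colours, so that the non-choosability of $L^*$ forces every proper colouring of the small side into one of a bounded family $\mathcal F$ of local patterns, with $|\mathcal F|$ depending only on $G_1$, $k$ and $d$. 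On each large-side fibre $G_1^{(b_j)}$, lists of size $k+d-1$ would then be chosen so that the $d$ matching edges from $b_j$ to the $a_i$'s obstruct one designated pattern from $\mathcal F$. Taking $N>|\mathcal F|$ and pigeonholing over the $b_j$'s rules out every small-side pattern, producing the required contradiction.

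\emph{Main obstacle.} The greedy upper bound is routine; the real work is the tightness step. The delicate point is to engineer the lists on the big side of $K_{d,N}$ so that each $b_j$-copy precisely kills a single small-side pattern through its thin matching interface, while remaining $L$-colourable under every other pattern. Because $|\mathcal F|$ is bounded only in terms of $G_1$ and $d$, choosing $N$ huge absorbs the combinatorial slack and makes the pigeonhole conclusion go through, but the local encoding of reserve colours at each fibre needs to be balanced carefully so that no big-side copy is accidentally over- or under-constrained.
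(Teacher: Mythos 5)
First, a remark on the comparison itself: the paper does not prove this theorem. It is quoted from \cite{BJKM06}, and the sentence after Corollary~\ref{cor:cart_maxdegree} claiming that the list-cover version of Corollary~\ref{cor:combininglocaldegreewithcartesianproduct} ``yields'' it is evidently a slip for Theorem~\ref{thm:cartesianproductFractPack} (that argument gives a bound in terms of $\Delta(G_2)$ and $\chi_\ell^{\bullet}$, not $d$ and $\chi_\ell$). So there is no in-paper proof to measure you against, and I assess your argument on its own terms. Your upper bound is correct and is the standard one: colouring the $G_1$-fibres in reverse degeneracy order of $G_2$ leaves residual lists of size at least $\chi_\ell(G_1)$ on each fibre, which induces a copy of $G_1$.

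The tightness half has a concrete gap. For the fibre $G_1^{(b_\tau)}$ to obstruct a small-side pattern $\tau=(\phi_1,\dots,\phi_d)$, the residual list at $(v,b_\tau)$, after deleting the colours forced through the $d$ matching edges to $(v,a_1),\dots,(v,a_d)$, must shrink to a bad $(k-1)$-assignment; that requires deleting \emph{exactly} $d$ colours at \emph{every} vertex $v$. But $a_1,\dots,a_d$ are pairwise non-adjacent in $K_{d,N}$, hence so are $(v,a_1),\dots,(v,a_d)$ in $G_1\square K_{d,N}$, and nothing in your setup prevents $\phi_i(v)=\phi_{i'}(v)$ for some $i\neq i'$. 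For such a pattern the residual list at $(v,b_\tau)$ has size at least $k=\chi_\ell(G_1)$, and a copy of $G_1$ with all lists of size at least $k$ is always colourable, so that pattern is not killed and the whole product admits a proper colouring. The missing idea is to give the $d$ small-side fibres pairwise \emph{disjoint} colour palettes, which forces $\phi_1(v),\dots,\phi_d(v)$ to be distinct for every $v$; then $L(v,b_\tau):=\sigma_\tau(L^*(v))\cup\{\phi_1(v),\dots,\phi_d(v)\}$, with $\sigma_\tau$ an injection of the colours of $L^*$ into a fresh palette, has size exactly $k+d-1$ and its residual under $\tau$ is a relabelled copy of the bad assignment $L^*$. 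With that fix the argument closes: there are at most $(k+d-1)^{d|V(G_1)|}$ patterns, one large-side vertex per pattern suffices (no pigeonhole needed, just $N\geq|\mathcal{F}|$). Two smaller points: the ``reserve colours'' and the non-choosability of $L^*$ play no role on the small side --- arbitrary lists there already yield a finite pattern family, and $L^*$ is needed only on the large side; and you should note that $K_{d,N}$ is indeed $d$-degenerate, so the construction is admissible.
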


In the context of list- and correspondence colouring, bounds in terms of maximum degree (denoted $\Delta(G)$) are often a direct corollary of the same bound with maximum degree replaced with degeneracy. This is no longer true for fractional list- and correspondence packing. 
In particular, it has been shown~\cite{CCDK23} that the greedy bound 
$\chi_{c}^{\bullet}(G) \leq \Delta(G)+1$ still survives, despite the existence of $d$-degenerate graphs $G$ with $\chi_{\ell}^{\bullet}(G)\geq d+2$. It is unknown, and seems a challenging task, to determine what are the best-possible upper bounds in terms of $d$. It could be anywhere between $d+2$ and $2d$. Thus it is no surprise that in our efforts to generalise~\cref{thm:cartesianproductList}, for now we have to be satisfied with an analogue in terms of maximum degree.

\begin{thr}\label{thm:cartesianproductFractPack}
Let $G_1$ and $G_2$ be graphs. Then their Cartesian product satisfies
$$\chi_{\ell}^{\bullet}(G_1 \square G_2) \leq \chi_{\ell}^{\bullet}(G_1)+\Delta(G_2),$$
and
$$\chi_{c}^{\bullet}(G_1 \square G_2) \leq \chi_{c}^{\bullet}(G_1)+\Delta(G_2).$$
\end{thr}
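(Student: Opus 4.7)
Let $G = G_1 \square G_2$, set $k := \chi_c^{\bullet}(G_1)$, $\Delta := \Delta(G_2)$ and $s := k + \Delta$. The plan is to prove the correspondence inequality; the list-colouring version follows by running the same argument with $\chi_\ell^{\bullet}(G_1)$ in place of $\chi_c^{\bullet}(G_1)$, since every list-assignment is a special correspondence cover. Fix an arbitrary correspondence cover of $G$ with lists of size $s$ at every vertex. The goal is to exhibit a probability distribution on proper $L$-colourings $c$ such that $\Pr[c((x,v)) = b] = 1/s$ at every vertex $(x,v)$ and colour $b \in L((x,v))$.

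The strategy exploits the natural layering of $G$ into copies $G_1^v := V(G_1) \times \{v\}$ for $v \in V(G_2)$: each $G_1^v$ induces a copy of $G_1$, while the cross-edges between copies form perfect matchings indexed by $E(G_2)$. I fix an ordering $v_1, \ldots, v_m$ of $V(G_2)$ and construct the joint distribution by processing the layers $G_1^{v_1}, \ldots, G_1^{v_m}$ in turn. Assume inductively that after step $i-1$ the distribution on the colourings of the first $i-1$ copies satisfies $\Pr[c((x, v_j)) = b] = 1/s$ for every already-coloured vertex. Conditioned on any such partial colouring, for each $x \in V(G_1)$ the cross-edges from $(x, v_i)$ to previously-coloured neighbours forbid at most $\deg_{G_2}(v_i) \leq \Delta$ colours of $L((x, v_i))$, so the reduced list $L'((x, v_i))$ has size at least $s - \Delta = k$. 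The hypothesis $\chi_c^{\bullet}(G_1) \leq k$ then provides a balanced probability distribution on proper $L'$-colourings of $G_1^{v_i}$, which I use to extend the joint distribution by one further layer.

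The crux is to check that this procedure yields unconditional marginal $1/s$ at every $(x, v_i)$. When $G_2$ is a tree equipped with a BFS ordering, each $v_i$ has at most one previously-coloured neighbour, and a one-line computation gives
\[
\Pr[c((x, v_i)) = b] = \left(1 - \frac{1}{s}\right) \cdot \frac{1}{s-1} = \frac{1}{s}.
\]
More generally, writing $F$ for the random forbidden set at $(x, v_i)$ and using the identity $\sum_b \indicator{b \notin F}/(s - |F|) = 1$, the marginal will equal $1/s$ whenever the law of $F$ is invariant under permutations of $L((x,v_i))$, which for instance occurs when the previously-coloured neighbours of $v_i$ are mutually independent (as happens for bipartite $G_2$ with a bipartition-based ordering).

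The main obstacle appears when $v_i$ has several previously-coloured neighbours whose colours are dependent through the joint distribution built up in earlier layers: the law of $F$ may then be biased across colours and the naive reduced-list sampling produces non-uniform marginals. This is exactly why the theorem features $\Delta(G_2)$ rather than the degeneracy of $G_2$. I expect to close the gap either by refining the conditional distribution at step $i$ so that the effective forbidden set has a colour-symmetric law (for example, by coupling the samples across the neighbourhood of $v_i$, or by randomly padding $F$ in a way that cancels the induced bias), or alternatively by proving the inequality directly via LP duality on the fractional chromatic number of the correspondence cover graph of $G_1 \square G_2$.
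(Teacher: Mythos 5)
Your proposal follows the right general intuition (peel off the $G_1$-layers of the product and use a balanced distribution on each copy of $G_1$), but it contains a genuine gap that you yourself flag and do not close: when $v_i$ has several previously-coloured neighbours, the forbidden set $F$ at $(x,v_i)$ need not have a colour-symmetric law, and then $\EE\bigl[\indicator{b\notin F}/(s-|F|)\bigr]$ is not $1/s$. This is not a removable technicality of your setup — it genuinely fails. For instance with $s=3$ and two previously-coloured neighbours whose forbidden colours are coupled as $(c_A,c_B)\in\{(1,1),(2,3),(3,2)\}$ each with probability $1/3$ (both marginals uniform), colour $1$ ends up with unconditional probability $2/3$ and colour $2$ with probability $1/6$. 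Since the joint law of the earlier layers is produced by your own recursive construction, you have no control over these correlations, and neither of the repairs you sketch (padding $F$, coupling across the neighbourhood, LP duality) is carried out. As written, the argument only proves the theorem when every layer sees at most one previously-coloured neighbour, i.e.\ essentially the tree case (\cref{cor:cartesianproductwithttree} of the paper).

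The paper's proof avoids the multi-neighbour bias entirely by running the induction in the opposite direction and strengthening the statement. It proves the local-degree version (\cref{cor:combininglocaldegreewithcartesianproduct}): if $|L(a,b)|\geq \chi_c^{\bullet}(G_1)+\deg_{G_2}(b)$ for every vertex, the cover has a fractional packing. Inducting on $|V(G_2)|$, one deletes a maximum-degree vertex $b_0$ of $G_2$ and takes $T=G_1\square(G_2-b_0)$; by the Cartesian product structure every vertex $(a,b)$ of $T$ has \emph{at most one} neighbour outside $T$, namely $(a,b_0)$. The technical \cref{lem:technicalfractionallemma} (from~\cite{CCZ23}) then does exactly the one-external-neighbour rebalancing that your tree-case computation performs, with condition (iv) (lists shrunk by one on vertices adjacent to the deleted layer) matching the inductive hypothesis because $\deg_{G_2-b_0}(b)=\deg_{G_2}(b)-1$ for neighbours $b$ of $b_0$. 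In other words, the cost of the $\deg_{G_2}(b)$ cross-neighbours is paid one unit per induction step rather than all at once, which is precisely the mechanism your forward layer-by-layer construction is missing. To complete your proof you would need either to import that lemma (or reprove its one-neighbour rebalancing) and restructure the induction accordingly, including the local-degree strengthening of the statement.
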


To put this further in perspective, we stress that even for the Cartesian product of two cliques the situation is unclear. Since $K_n \square K_m$ is the linegraph of $K_{n,m}$, a celebrated result of Galvin~\cite{Gal95} yields $\chi_{\ell}(K_n \square K_m)= \max(n,m)$. For correspondence colouring the exact values are unknown. Kostochka and Bernshteyn~\cite{BK19} showed that $\chi_c(K_n \square K_m)\geq n+1$ if $n=m\geq 2$, while in the asymptotic regime Postle and Molloy~\cite{MP22} proved that it is at most $(1+o(1)) \max(n,m)$. For $\chi_c^{\bullet}$ the best general upper bound we are aware of is $n+m-1$, which is also implied by~\cref{thm:cartesianproductFractPack}.\\

Despite the difficulty of proving $\chi_{c}^{\bullet}(G) \leq d+2$ for $d-$degenerate graphs (if true!), we are able to make some modest progress.
Kaul, Mathew, Mudrock and Pelsmajer proved:
\begin{thr}[\cite{KMMP22}]
Let $G$ be a $d$-degenerate graph. Then $G$ is $\frac{1}{2^{d+1}}$-flexibly $(d+2)$-choosable.
\end{thr}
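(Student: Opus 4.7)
The plan is to follow a standard probabilistic approach: for a given precolouring request $(D,r)$, it suffices to construct a probability distribution on proper $L$-colourings $c$ such that $\Pr[c(v)=r(v)] \ge 2^{-(d+1)}$ for every $v \in D$, since linearity of expectation then produces a single colouring honouring at least $2^{-(d+1)}|D|$ requests.

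Fix a degeneracy ordering $v_1,\dots,v_n$ of $G$, so that each $v_i$ has at most $d$ back-neighbours $B(v_i)$. Process the vertices in this order to build a random colouring. At step $i$, the set of available colours $A(v_i) := L(v_i)\setminus\{c(u):u\in B(v_i)\}$ satisfies $|A(v_i)| \ge (d+2)-d = 2$. Choose a $2$-subset $S_{v_i} \subseteq A(v_i)$ via the rule: if $v_i \in D$ and $r(v_i) \in A(v_i)$, then insist that $r(v_i) \in S_{v_i}$; otherwise take $S_{v_i}$ to be any $2$-subset of $A(v_i)$. Finally, sample $c(v_i)$ uniformly from $S_{v_i}$. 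By construction the output is a proper $L$-colouring.

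Fix $v \in D$ and let $\mathcal{F}_w$ denote the $\sigma$-algebra generated by the random choices made strictly before vertex $w$. On the event $\{r(v)\in A(v)\}$, which is $\mathcal{F}_v$-measurable, we have $r(v) \in S_v$, so $\Pr[c(v)=r(v)\mid \mathcal{F}_v]= 1/2$ there; this yields $\Pr[c(v)=r(v)] \ge \tfrac12 \Pr[r(v)\in A(v)]$. To bound the remaining factor, enumerate $B(v) = \{u_1,\dots,u_k\}$ with $k \le d$ and set $E_i = \{c(u_i) \ne r(v)\}$. Since $c(u_i)$ is, conditional on $\mathcal{F}_{u_i}$, uniform on the $2$-element set $S_{u_i}$, one has $\Pr[c(u_i)=r(v)\mid \mathcal{F}_{u_i}] \le 1/2$ pointwise. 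As $\bigcap_{j<i}E_j$ is $\mathcal{F}_{u_i}$-measurable, the tower property gives $\Pr[E_i \mid \bigcap_{j<i}E_j] \ge 1/2$, and the chain rule yields
$$\Pr[r(v)\in A(v)] = \Pr\biggl[\bigcap_{i=1}^k E_i\biggr] \ge 2^{-k} \ge 2^{-d}.$$
Multiplying the two bounds gives $\Pr[c(v)=r(v)] \ge 2^{-(d+1)}$, as needed.

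The main subtle point is the chain-rule step: a priori, conditioning on earlier good events $E_j$ could inflate the probability that a later back-neighbour uses the colour $r(v)$. The uniform-on-a-$2$-set construction of $S_{u}$ at every vertex $u$ is precisely what bounds $\Pr[c(u)=x\mid \mathcal{F}_u] \le 1/2$ for every colour $x$, which keeps the conditional estimate robust under any prior conditioning. This is also where the extra colour in $(d+2)$ is used: with only $d+1$ colours per list we could not guarantee $|A(v_i)|\ge 2$ and so could not sample uniformly from a $2$-set.
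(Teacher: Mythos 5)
Your argument is correct and proves exactly the stated (non-weighted) flexibility claim; the only cosmetic point is that the back-neighbours $u_1,\dots,u_k$ of $v$ should be enumerated in the processing order, so that $\bigcap_{j<i}E_j$ really is $\mathcal{F}_{u_i}$-measurable, which you clearly intend. However, your route is genuinely different from the one taken in the paper. Your distribution depends on the request pair $(D,r)$: the $2$-set $S_u$ is biased towards $r(u)$, and the lower bound $\Pr[c(v)=r(v)]\ge 2^{-(d+1)}$ is only guaranteed for the requested colours. This is essentially the original argument of Kaul, Mathew, Mudrock and Pelsmajer, which is why the paper states the theorem with a citation rather than a proof. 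The paper instead derives it from the stronger assertion that $G$ is \emph{weighted} $2^{-(d+1)}$-flexibly $(d+2)$-choosable (\cref{thm:wflex_ddegenerate}), since weighted flexibility implies flexibility. The proof there (\cref{lem:degeneracy_weightedflexibilty}) runs the induction in the opposite direction: it peels off the last vertex $v_n$, whose list has size exactly $2$, colours it uniformly at random, and applies the induction hypothesis to the residual cover obtained by deleting $L(v_n)$ and the neighbours of the chosen colour. This yields a \emph{single} distribution, independent of any request, in which every colour $y\in L(u)$ at every vertex $u$ appears with probability at least $2^{-(\deg^+(u)+1)}$, and it works for correspondence-covers with local list sizes $\deg^+(u)+2$. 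In short: your approach buys a short, elementary, explicitly algorithmic proof of the statement as given; the paper's approach buys the request-independent (hence weighted) conclusion together with the correspondence and local-degree strengthenings, none of which your request-dependent construction can certify.
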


They proved this via a probabilistic statement which is reminiscent of \emph{weighted} $\frac{1}{2^{d+1}}$-flexibility, but is still quite different. For each collection of requests they constructed a probability distribution on list-colourings such that at every vertex, the requested colour appears with probability at least $\frac{1}{2^{d+1}}$. For weighted $\frac{1}{2^{d+1}}$-flexibility however we need a uniform probability distribution that works for all collections of requests simultaneously. This is what we prove.
In fact we derive a local-degree strengthening which also works for correspondence colouring, and in particular implies:
\begin{thr}\label{thm:wflex_ddegenerate}
Let $G$ be a $d$-degenerate graph. Then $G$ is weighted $\frac{1}{2^{d+1}}$-flexibly $(d+2)$-choosable.
\end{thr}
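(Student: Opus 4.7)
The plan is to prove a correspondence local-degree strengthening of \cref{thm:wflex_ddegenerate}, from which the stated result follows via $\chi_\ell^\bullet \leq \chi_c^\bullet$. Specifically, for every graph $G$ equipped with a vertex ordering $v_1,\ldots,v_n$ with back-degrees $d_i := |N(v_i)\cap\{v_1,\ldots,v_{i-1}\}|$, and every correspondence cover $(L,H)$ with $|L(v_i)|=d_i+2$, the aim is to construct a probability distribution $\mu$ on proper $L$-colourings of $G$ such that $\Pr_\mu[c(v_i)=x]\geq 1/2^{d_i+1}$ for every $i$ and every $x\in L(v_i)$. Since a $d$-degeneracy ordering satisfies $d_i\leq d$ for all $i$, taking $|L(v_i)|=d+2$ yields the theorem directly.

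I would construct $\mu$ by induction on $n$, processing vertices along the ordering. Assuming $\mu'$ has been built on proper $L$-colourings of $G':=G-v_n$ satisfying both the desired marginal bound and the additional inductive invariant $\Pr_{\mu'}[c(u)=y \mid \text{history strictly before }u]\leq 1/2$ at every vertex $u$ and colour $y\in L(u)$, we extend $\mu'$ to $\mu$ on $G$ by specifying, conditional on each $c'$ in the support of $\mu'$, a random assignment of $c(v_n)$ in the correspondence-available set $A(c')\subseteq L(v_n)$ (of cardinality $\geq 2$, since only $d_n$ colours can be forbidden by earlier coloured neighbours). The rule at $v_n$ combines a uniformly random fair coin with a structured choice based on $c'$, designed so that, after integrating over $c'\sim\mu'$, every $x\in L(v_n)$ receives marginal at least $1/2^{d_n+1}$ and so that the upper-bound invariant is preserved at $v_n$.

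The bound at $v_n$ factors in two parts. The inductive upper-bound invariant, together with the tower property applied along the $d_n$ earlier neighbours of $v_n$, delivers
\[
\Pr_{\mu'}[x\in A(c')] \;=\; \Pr_{\mu'}\Bigl[\,\textstyle\bigcap_{u\in N(v_n)\cap\{v_1,\ldots,v_{n-1}\}} \{c(u)\neq y_u(x)\}\,\Bigr] \;\geq\; (1/2)^{d_n},
\]
where $y_u(x)$ is the colour at $u$ conflicting with $x$ at $v_n$ under the correspondence. The rule at $v_n$ then contributes a further factor of $1/2$ in an amortised sense over the history, yielding $\Pr_\mu[c(v_n)=x]\geq 1/2^{d_n+1}$.

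The main obstacle is designing the rule at $v_n$ so that both the per-colour lower bound of $1/2$ (in expectation over the history) and the per-atom upper bound of $1/2$ hold simultaneously. When $|A(c')|>2$, a uniform-from-$A(c')$ choice produces atoms of size only $1/|A(c')|<1/2$, which is insufficient for the amortised lower-bound contribution at the rare colours. The resolution is a non-uniform rule that leverages the identities of the forbidden colours at the earlier neighbours of $v_n$ (as encoded in $c'$) to boost the weight of rare colours in their available realisations, while the uniform component safeguards the upper-bound invariant. Working this out — verifying both invariants by a careful case analysis — is the technical heart of the proof and is where the argument extends the request-based construction of Kaul, Mathew, Mudrock and Pelsmajer~\cite{KMMP22} to the weighted correspondence setting.
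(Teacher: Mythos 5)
Your high-level goal (a local correspondence-cover strengthening that implies the theorem) matches the paper's, but your induction runs in the opposite direction and this leaves a genuine gap at exactly the step you defer. You orient degrees \emph{backwards} ($d_i=|N(v_i)\cap\{v_1,\dots,v_{i-1}\}|$, lists of size $d_i+2$) and extend a distribution on $G-v_n$ to $v_n$, whose available set $A(c')$ can have any size between $2$ and $d_n+2$ depending on the history. As you note, a uniform choice from $A(c')$ fails, and you need a non-uniform rule such that for \emph{every} colour $x$ the average of $\Pr[\text{pick }x\mid c']$ over the histories with $x\in A(c')$ is at least $1/2$, while simultaneously every atom stays at most $1/2$ to preserve your invariant. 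You do not construct such a rule or verify that it exists; you explicitly label this "the technical heart" and leave it open. That is not a proof — it is precisely the difficulty that makes the single-request argument of Kaul, Mathew, Mudrock and Pelsmajer hard to upgrade to the weighted setting, and it is not clear your two invariants alone are strong enough to carry it through (the distribution of $|A(c')|$ across histories is not controlled by them).

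The paper (\cref{lem:degeneracy_weightedflexibilty}) avoids this entirely by orienting degrees \emph{forwards}: $\deg^+(v_i)$ counts neighbours $v_j$ with $j>i$, the profile is $(\deg^+(v)+2)$, and the induction deletes the \emph{last} vertex $v_n$, which has $\deg^+(v_n)=0$ and hence a list of size exactly $2$. One then branches uniformly over the two colours $x\in L(v_n)$, and for each branch applies the inductive hypothesis to the cover $\sH_x$ of $G-v_n$ obtained by deleting $L(v_n)$ and the neighbours of $x$; each neighbour $u$ of $v_n$ loses at most one list element but also one unit of forward degree, so the inductive bound improves by a factor of $2$ in that branch, which exactly compensates the $\frac12$ branching probability and the branch in which a colour is killed. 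All the balancing you are trying to engineer at the extension step is thereby absorbed into the inductive hypothesis, and no per-atom upper-bound invariant is needed. If you want to rescue your version, the cleanest fix is to reverse your ordering so that the vertex you process last has the size-$2$ list.
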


\textbf{Outline}\\
The results on treedepth (\cref{thm:treedepth}) and Cartesian products (among which \cref{thm:cartesianproductFractPack}) are in~\cref{sec:cartesianproducts}, the proof of~\cref{thm:pathwidth} on pathwidth is in~\cref{sec:pw+1}, and we prove~\cref{thm:wflex_ddegenerate} in~\cref{sec:degeneracy}.

\section{Definitions, notation and technical tools}\label{sec:defnot}

\begin{defi}
The \emph{Cartesian product} $A \square B$ of two graphs $A$ and $B$ is the graph with vertex set $V(A) \times V(B)$, such that two vertices $(a_1,b_1)$ and $(a_2,b_2)$ are adjacent in $A \square B$ if and only if either $a_1=a_2$ and $b_1b_2\in E(B)$, or $b_1=b_2$ and $a_1a_2 \in E(A)$.
\end{defi}

\begin{defi}
Treedepth, treewidth and pathwidth have many equivalent definitions. Here we describe the ones that we use in this manuscript.
 The \emph{treedepth} $\mathrm{td}(G)$ of a connected graph $G$ is the minimum integer $t$ such that some supergraph of $G$ has a spanning rooted tree $T$ (also called a \emph{Tr\'emaux tree}) of height $t$ such that every edge of $G$ joins an ancestor-descendant pair of $T$. 
 If $G$ is not connected, the treedepth is the maximum of $\mathrm{td}$ over all its connected components. The \emph{treewidth} $\mathrm{tw}(G)$ of a graph $G$ is the smallest integer $t$ such that $G$ is a subgraph of some $t$-tree. A \emph{$t$-tree} is a graph that can be constructed via the following procedure. The complete graph $K_{t+1}$ is a $t$-tree, and: given any $t$-tree $T$, creating a new vertex and joining it with a copy of $K_t$ in $T$ is also a $t$-tree. 
 A $t$-tree which is either $K_{t+1}$ or has exactly two vertices of degree $t$, is a \emph{$t$-path}.
A \emph{$t$-clique-separator} of a $t$-path $P$ is a vertex subset $B$ of $P$ that induces a clique on $t$ vertices such that the graph $P-B$ is disconnected.
A \emph{t}-caterpillar is a $t$-tree that can be partitioned into a $t$-path, and a (possibly empty) set of \emph{pendant} degree-$t$-vertices that are each adjacent to some $t$-clique-separator of the $t$-path.
Finally, the \emph{pathwidth} $\mathrm{pw}(G)$ of $G$ is the smallest $t$ such that $G$ is a subgraph of some $t$-caterpillar.\\

The following is a more elaborate description of $t$-caterpillars, which will be helpful to understand our proofs in~\cref{sec:pw+1}. Initialise an \emph{active} vertex set $A_1=\{v_1,v_2,\ldots,v_{t+1}\}$ and let $G_1$ be the $t$-caterpillar on $A_1$ which is isomorphic to $K_{t+1}$. Now let $m\geq 1$. Suppose we are given a $t$-caterpillar $G_m$ and the active vertex set $A_m \subset V(G_m)$ of size $t+1$. Let $B_m\subset A_m$ be any subset of size $t$, create a new vertex $v_{t+1+m}$, make $A_{m+1}:= B_m \cup \{v_{t+1+m}\}$ the new active vertex set, and let $Z$ be the graph on $A_{m+1}$ which is isomorphic to $K_{t+1}$. Then $G_{m+1}:=G_m \cup Z$ is also a $t$-caterpillar. 
We can think of the set $B_i$ as the interface along which a new active clique $A_{i+1}$ is glued on the old active clique $A_i$. Note that the unique vertex in $A_{i}\setminus B_i$ cannot appear in any active set $A_j$ with $j>i$ anymore. If $B_{i+1}\neq B_i$ for every $i$, then the produced t-caterpillar is a $t$-path. 
If $B_{i+1}=B_i$, then the vertex $v_{t+1+i}$ can only be part of one active set (namely $A_{i+1}$), so in the final $t$-caterpillar $v_{t+1+i}$ must be a \emph{pendant} vertex of degree $t$, which is incident to the $t$-clique separator $B_i$. The remaining non-pendant vertices of $G$ induce a $t$-path. 
The order in which the vertices of a $t$-caterpillar $G$ are added defines a vertex ordering $v_1,v_2,\ldots,v_{|V(G)|}$. Naturally one can also construct the $t$-caterpillar in the opposite direction, starting from $v_{|V(G)|}$ instead of $v_1$. This reverses the ordering of the active sets $A_i$ and $t$-clique-separators $B_i$, but it might produce a vertex ordering that is \emph{different} from $v_{|V(G)|}, \ldots, v_2,v_1$. We need this alternative vertex ordering in the proof of~\cref{thr:chicbullet<pw+1}.
\end{defi}

In the remainder of this section we give the precise definitions for correspondence colouring, fractional packing, and the graph invariants $\chi_{\ell}, \chi_{\ell}^{\bullet}, \chi_{\ell}^{\star}$, and $\chi_c, \chi_c^{\bullet}, \chi_{c}^{\star}$. We also summarise tools for fractional packing that were proved in~\cite{CCZ23}.\\

\begin{defi}
Given a graph $G$, a pair $\sH=(L,H)$ is a \emph{correspondence-cover} of $G$ if $H$ is a graph and $L:V(G)\to 2^{V(H)}$ is a mapping that satisfies the following:
\begin{enumerate}[(i)]
 \item $L$ induces a partition of $V(H)$,
 \item the bipartite subgraph of $H$ induced between $L(u)$ and $L(v)$ is empty whenever $uv\notin E(G)$,
 \item\label{itm:corrdef} the bipartite subgraph of $H$ induced between $L(u)$ and $L(v)$ is a matching $M_{uv}$ whenever $uv\in E(G)$, 
 \item the subgraph of $H$ induced by $L(v)$ is a clique for each $v\in V(G)$.
\end{enumerate}
\end{defi}

It can be convenient to drop $\sH$ and $L$ from the notation, saying for example that some property of the correspondence-cover holds if it holds for the \emph{cover graph} $H$.
To see how this concept is related to list colouring, observe that a list-assignment $L$ of $G$ naturally gives rise to a correspondence-cover by choosing the matching between $L(u)$ and $L(v)$ such that same colours are joined by an edge, for every edge $uv$ of $G$. In that case $\sH$ is a \emph{list-cover}. While not every correspondence-cover is a list-cover, with some abuse of notation we will always refer to the vertex sets $L(v) \subseteq V(H)$ as \emph{lists}. A correspondence-cover is \emph{$k$-fold} if $|L(v)|=k$ for each vertex $v$ of $G$.
In this case, we can write $L(v)=\{1_v,2_v,\ldots,k_v\}$, and when no confusion arises $L(v)=[k]=\{1,2,\ldots,k\}$, for every $v \in V(G)$.
An edge subset $A\subseteq E(G)$ is assigned \emph{identity matchings} if for every $uv\in A$, the matching in the cover graph connects $x_u$ and $x_v$ for every $x \in [k].$

Given a correspondence-cover $\sH=(L,H)$, an \emph{independent transversal} of $\sH$ is an independent set $I$ of $H$ such that $|I \cap L(v)|=1$ for every vertex $v$ of $G$, i.e., such that every list is intersected precisely once. In this terminology, the \emph{correspondence chromatic number} $\chi_c(G)$ of $G$ is the least $k$ such that every $k$-fold correspondence-cover of $G$ has an independent transversal. 
Similarly, the list-chromatic number $\chi_{\ell}(G)$ can be reinterpreted as the least $k$ such that every $k$-fold list-cover of $G$ has an independent transversal, since in that case the independent transversals of $\sH$ are in bijection with the list-colourings of $G$.
As is common in the literature, for an independent transversal $I$ of $(H,L)$, the function $c:V(G) \rightarrow V(H)$ that sends each vertex $v$ to $L(v)\cap I$ is referred to as a \emph{correspondence-colouring} of $G$. Note that the independent transversals and correspondence-colourings are in bijection with each other.\\

The following definition is core to the fractional packing numbers $\chi_c^{\bullet}$ and $\chi_{\ell}^{\bullet}$ that we study in this work. We stress that not all lists need to have the same size.
\begin{defi}
A correspondence-cover $\sH=(L,H)$ of a graph $G$ admits a \emph{fractional packing} if there exists a probability distribution $\mathbb{P}$ on independent transversals of $\sH$ such that the following holds for a random independent transversal $I$ taken from that distribution:
$$\mathbb{P}(x\in I)\geq \frac{1}{|L(v)|} \text{, for every } v\in V(G) \text{ and every } x\in L(v).$$
\end{defi}

\begin{defi}
The \emph{fractional correspondence packing number} $\chi_c^{\bullet}(G)$ of a graph $G$ is the least integer $k$ such that every $k$-fold correspondence-cover of $G$ admits a fractional packing.
Similarly, the \emph{fractional list packing number} $\chi_{\ell}^{\bullet}(G)$ is the least $k$ such that this holds for every $k$-fold list-cover.
\end{defi}

Note that if all lists have the same size $k$, then $(L,H)$ admitting a fractional packing is equivalent to $H$ having \emph{fractional chromatic number} $k$ (one direction is immediate, the other follows from~\cite[Prop.~11]{CCDK23}). This is what motivated the definition of the fractional packing numbers. It also means that for deriving a lower bound $\chi_c^{\bullet}(G)>k$, it suffices to prove that some subgraph of some $k$-fold covergraph $H$ of $G$ has fractional chromatic number $>k$.\\

For upper bounds, the next two technical lemmas from~\cite{CCZ23} are of use to us. The first,~\cite[Lem.~7.1]{CCZ23}, says that the existence of a fractional packing is preserved under increasing the size of a list.

\begin{lem}[\cite{CCZ23}]\label{lem:monotonicityfractionalpacking}
Let $G$ be a graph and let $s: V(G) \rightarrow \mathbb{N}$ be a function. Suppose every correspondence-cover $\sH^{-}=(L^{-},H^{-})$ of $G$ with $|L^{-}(v)| = s(v)$ for all $v$ has a fractional packing. 
Then also every correspondence-cover $\sH=(L,H)$ of $G$ with $|L(v)| \geq s(v)$ for all $v$ has a fractional packing. The same holds mutatis mutandis for list-covers.
\end{lem}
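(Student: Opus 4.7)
The plan is to reduce the general case to the hypothesis via a random restriction argument. Given a correspondence-cover $\sH = (L, H)$ with $|L(v)| \geq s(v)$ for all $v$, I would independently pick, for each vertex $v \in V(G)$, a uniformly random subset $S(v) \subseteq L(v)$ of size exactly $s(v)$. The first step is to verify that $\sH_S := (S, H[\bigcup_v S(v)])$ is itself a valid correspondence-cover of $G$ with $|S(v)|=s(v)$: the matching $M_{uv}$ between $L(u)$ and $L(v)$ restricts to a submatching between $S(u)$ and $S(v)$, each list-induced clique of $H$ restricts to a clique on $S(v)$, and bipartite parts between non-adjacent vertices remain empty. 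By the hypothesis of the lemma, $\sH_S$ admits a fractional packing: a distribution $\mathbb{P}_S$ on its independent transversals such that $\mathbb{P}_S[y \in I] \geq 1/s(v)$ for every $v$ and every $y \in S(v)$.

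The next step is the key structural observation that every independent transversal of $\sH_S$ is automatically an independent transversal of the original $\sH$. Indeed, it is independent in $H$ (as $H[\bigcup_v S(v)]$ is induced) and intersects each $L(v) \supseteq S(v)$ in exactly one vertex. Consequently, the two-stage experiment of first sampling $S$ (as above) and then sampling $I \sim \mathbb{P}_S$ defines a genuine probability distribution on independent transversals of $\sH$; this will be the candidate fractional packing.

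The final step is to check the probability bound. For any $v \in V(G)$ and any colour $x \in L(v)$, conditioning on whether or not $x$ is retained gives
\[
\Pr[x \in I] = \Pr[x \in S(v)] \cdot \Pr[x \in I \mid x \in S(v)] \geq \frac{s(v)}{|L(v)|} \cdot \frac{1}{s(v)} = \frac{1}{|L(v)|},
\]
where the first factor comes from uniform sampling of an $s(v)$-subset of $L(v)$, and the second factor uses that for every realisation of $S$ with $x \in S(v)$ the packing $\mathbb{P}_S$ gives $x$ probability at least $1/s(v)$, so the conditional expectation inherits the same lower bound. The list-cover variant is literally the same argument: restricting a list-cover to sublists yields a list-cover, so $\sH_S$ inherits that structure as well.

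I do not anticipate a real obstacle; the argument is essentially a clean averaging over random restrictions. The one subtlety worth spelling out carefully is the two-way consistency between $\sH$ and $\sH_S$, namely that $\sH_S$ is a bona fide correspondence-cover of $G$ and that its independent transversals lift unchanged to independent transversals of $\sH$. Once that is verified, the probability calculation above is immediate and the two lower-bound factors telescope to $1/|L(v)|$ exactly, as required.
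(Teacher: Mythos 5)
Your proof is correct. The paper itself imports this lemma from \cite{CCZ23} without reproducing the argument, but the averaging-over-random-sublists reduction you give (restrict each list to a uniformly random $s(v)$-subset, note that the restriction is still a valid correspondence-cover whose independent transversals are independent transversals of the original, and compute $\Pr[x\in I]\geq \frac{s(v)}{|L(v)|}\cdot\frac{1}{s(v)}=\frac{1}{|L(v)|}$) is precisely the standard proof of this monotonicity statement, so there is nothing to flag.
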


For a subgraph $T$ of $G$, the graph obtained from $G$ by deleting the vertices of $T$ is denoted by $G-V(T)$. 
For a vertex $u$ of $T$, a \emph{neighbour outside $T$} refers to a neighbour of $u$ with respect to the graph $G$ that is in $V(G)\setminus V(T)$.
For a correspondence-cover $\sH=(L,H)$ of $G$, its \emph{restriction} to $G-V(T)$ is the cover of $G-V(T)$ obtained by removing $\bigcup_{v\in T} L(v)$ from $H$. 
While the next lemma is rather technical, its core message is this: to inductively upper-bound the fractional correspondence packing number of a graph, it suffices to find an induced subgraph that has smaller fractional packing number and few neighbours in the remainder of the graph. We remark that~\cite[Lem.~7.3]{CCZ23} was only stated for correspondence-covers, but the same proof works verbatim for list-covers.

\begin{lem}\cite{CCZ23}]\label{lem:technicalfractionallemma}
Let $G$ be a graph with a correspondence-cover $\sH=(L,H)$. If $G$ contains an induced subgraph $T$ satisfying the following four conditions, then $\sH$ has a fractional packing.

\begin{enumerate}[(i)]
 \item \label{item:TechInd1} Every vertex of $T$ has at most one neighbour outside $T$;
 \item \label{item:TechInd2} $2 \leq |L(u)|\leq |L(v)|$ for every $u \in V(T)$ with one neighbour $v$ outside $T$.
 \item \label{item:TechInd3} The restriction of $\sH$ to $G-V(T)$ has a fractional packing;
 \item \label{item:TechInd4} Every correspondence-cover $(L_T,H_T)$ of $T$ with 
\begin{itemize}
 \item $|L_T(u)| = |L(u)|$, for every $u\in V(T)$ with no neighbour outside $T$, and
 \item $|L_T(u)| = |L(u)|-1$, for every $u\in V(T)$ with one neighbour $v$ outside $T$
\end{itemize}
has a fractional packing.
\end{enumerate}
The same holds mutatis mutandis for list-covers.
\end{lem}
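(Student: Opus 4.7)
The plan is to construct a probability distribution on independent transversals of $\sH$ by combining the fractional packing of $G \setminus T$ given by (iii) with a fractional packing of a suitably shrunken cover of $T$ given by (iv). The key idea is that once we sample a transversal $I'$ of the restricted cover, for each vertex $u \in V(T)$ with an outside neighbour $v$ at most one element $y\in L(u)$ is ``blocked'' by $I'$ (via the matching $M_{uv}$). Removing that element leaves a list of size $|L(u)|-1\geq 1$ (by (ii)), so (iv) applies and produces a fractional packing of the remaining cover of $T$.

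To implement this, first sample $I'$ from the fractional packing of the restriction of $\sH$ to $G\setminus T$. Since $\sum_{x\in L(v)} \Pr(x\in I') = \EE[|I'\cap L(v)|] = 1$ for every $v\in V(G\setminus T)$, the bounds $\Pr(x\in I')\geq 1/|L(v)|$ must in fact hold with equality. For each $u\in V(T)$ with outside neighbour $v$, let $z_u := I'\cap L(v)$ and select a ``blocked'' element $b_u \in L(u)$: if $z_u$ is matched to $y\in L(u)$ in $M_{uv}$, force $b_u = y$; otherwise sample $b_u$ from a distribution specified below. Given $(I',(b_u))$, define a correspondence-cover $\sH_T$ of $T$ by shrinking $L(u)$ to $L(u)\setminus\{b_u\}$ whenever $u$ has an outside neighbour and leaving $L(u)$ unchanged otherwise. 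By (iv), $\sH_T$ admits a fractional packing; sample $I_T$ from it. The union $I := I'\cup I_T$ is an independent transversal of $\sH$: boundary conflicts are precluded by the definition of $b_u$, and internal conflicts by the two constituent packings.

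The main obstacle is verifying $\Pr(x\in I)\geq 1/|L(w)|$ for every $w\in V(G)$ and $x\in L(w)$. The cases $w\in V(G\setminus T)$ and $w\in V(T)$ with no outside neighbour are immediate from the two (tight) packings. The substantive case is $w = u \in V(T)$ with outside neighbour $v$: here $\Pr(x\in I_T \mid I',(b_u)) = 1/(|L(u)|-1)$ if $x\neq b_u$ and $0$ otherwise, so it suffices to arrange $\Pr(b_u = x)\leq 1/|L(u)|$ for every $x\in L(u)$. The matched contribution equals $1/|L(v)|\leq 1/|L(u)|$ when $x$ is matched in $M_{uv}$ (and $0$ otherwise) by hypothesis (ii), while the unmatched contribution has total mass $|U_v|/|L(v)|$, where $U_v\subseteq L(v)$ is the set of unmatched elements, and may be distributed freely over $L(u)$ through the choice of the random $b_u$. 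A short transportation argument using the identity $|L(v)|-|U_v| = |L(u)|-|U_u|$ (both equal the size of $M_{uv}$) together with $|L(v)|\geq |L(u)|$ shows that this unmatched mass can be apportioned so that $\Pr(b_u = x) = 1/|L(u)|$ exactly for every $x\in L(u)$. This fixes the distribution of $b_u$ and completes the verification. The list-cover analogue follows by the same argument, as the matched/unmatched structure of $M_{uv}$ and condition (ii) are preserved.
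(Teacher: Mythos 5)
Your proposal is correct: the two-stage sampling scheme (draw a transversal $I'$ of the restriction to $G\setminus T$, use it to choose one blocked element $b_u$ per boundary vertex $u$ --- forced when the colour of the outside neighbour is matched, and otherwise redistributed so that $b_u$ is exactly uniform on $L(u)$, which is possible since the matched mass $1/|L(v)|\le 1/|L(u)|$ and the leftover room sums to the unmatched mass $|U_v|/|L(v)|$ --- then apply hypothesis (iv) to the shrunken cover of $T$) is precisely the argument behind \cite[Lem.~7.3]{CCZ23}, which the present paper only cites rather than reproves. All the delicate points (exactness of the marginals of a fractional packing, non-emptiness of the shrunken lists via (ii), and the role of (i) in ensuring only one element per boundary list is blocked) are handled correctly, so there is no gap.
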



\section{Bounds for Cartesian products and other layered decompositions}\label{sec:cartesianproducts}

In this section we prove~\cref{thm:treedepth} and~\cref{thm:cartesianproductFractPack}.
To do so, we leverage that the graphs involved admit a nice decomposition into layers that have lower fractional correspondence packing number and few edges between them. That property is encapsulated in the following definition.

\begin{defi}\label{def:chicbulletwidth}
The \emph{$\chi_c^{\bullet}$-width} of a graph $G$ is defined as the minimum integer $t\geq 1$ such that there exists a partition $V_1\cup V_2 \cup \ldots \cup V_m$ of the vertices of $G$ with the following properties:
\begin{itemize}
 \item $\chi_c^{\bullet}(G[V_i]) \leq t$ for all $1\leq i \leq m$, and
 \item every vertex in $V_i$ has at most one neighbour in $\bigcup_{j=1}^{i-1}V_j$, for all $2\leq i \leq m$.
\end{itemize}
\end{defi}

\begin{lem}\label{lem:chicwidth}
$\chi_{c}^{\bullet}(G)\leq t+1$ for every graph $G$ with $\chi_c^{\bullet}$-width $t$.
\end{lem}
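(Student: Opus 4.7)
The plan is to induct on the number of layers $m$ in the decomposition witnessing that $G$ has $\chi_c^{\bullet}$-width $t$, using the technical layering tool~\cref{lem:technicalfractionallemma} at each step. The layer $V_m$ will play the role of the induced subgraph $T$, and the ``peeled off'' graph $G[V_1\cup\cdots\cup V_{m-1}]$ will be handled inductively.

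First I fix a partition $V_1,\ldots,V_m$ of $V(G)$ realising the width bound $t$, and a $(t+1)$-fold correspondence-cover $\sH=(L,H)$ of $G$; by definition of $\chi_c^{\bullet}$ it is enough to show that $\sH$ admits a fractional packing. The base case $m=1$ is immediate, since then $G=G[V_1]$ and $\chi_c^{\bullet}(G)\leq t\leq t+1$, so by~\cref{lem:monotonicityfractionalpacking} the $(t+1)$-fold cover $\sH$ has a fractional packing. For the inductive step, set $T:=G[V_m]$ and check the four hypotheses of~\cref{lem:technicalfractionallemma}:
\begin{itemize}
\item[(i)] holds by the second bullet of~\cref{def:chicbulletwidth}, which ensures every vertex of $V_m$ has at most one neighbour in $V_1\cup\cdots\cup V_{m-1}=V(G\setminus T)$.
\item[(ii)] is trivial since all lists in $\sH$ have the same size $t+1\geq 2$ (recall $t\geq 1$).
\item[(iii)] follows from the inductive hypothesis: the partition $V_1,\ldots,V_{m-1}$ witnesses that $G\setminus T$ has $\chi_c^{\bullet}$-width at most $t$, so by induction $\chi_c^{\bullet}(G\setminus T)\leq t+1$, and therefore the restriction of $\sH$ (a $(t+1)$-fold cover of $G\setminus T$) admits a fractional packing.
\item[(iv)] follows from the first bullet of~\cref{def:chicbulletwidth}: since $\chi_c^{\bullet}(T)\leq t$, every $t$-fold correspondence-cover of $T$ admits a fractional packing, and then~\cref{lem:monotonicityfractionalpacking} upgrades this to every cover of $T$ whose lists have size at least $t$, which covers both the size-$t$ and size-$(t+1)$ lists appearing in condition~(iv).
\end{itemize}

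Applying~\cref{lem:technicalfractionallemma} then yields the desired fractional packing of $\sH$, completing the induction. I expect no real obstacle: the lemma~\cref{lem:technicalfractionallemma} is essentially custom-built for this kind of layered argument, so the proof is mostly a matter of matching up definitions. The only subtle point to flag is condition~(ii), which requires lists of size at least two; this is why~\cref{def:chicbulletwidth} insists on $t\geq 1$ (so $t+1\geq 2$), and this is also why the bound comes out as $t+1$ rather than $t$.
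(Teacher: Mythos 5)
Your proof is correct and follows essentially the same route as the paper: induction on the number of layers $m$, taking $T=G[V_m]$ and verifying the four hypotheses of~\cref{lem:technicalfractionallemma}, with~\cref{lem:monotonicityfractionalpacking} supplying the list-size monotonicity needed in the base case and in condition~(iv). The only difference is that you spell out the appeal to~\cref{lem:monotonicityfractionalpacking} explicitly where the paper leaves it implicit, which is a welcome bit of extra care rather than a deviation.
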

\begin{proof}
Consider a $(t+1)$-fold correspondence-cover $\sH$ of $G$. Let $V_1\cup \ldots \cup V_m$ be a vertex partition of $G$ that certifies that $G$ has $\chi_c^{\bullet}$-width $t$. We proceed by induction on $m$. If $m=1$ then $G=G[V_m]$ has fractional correspondence packing number $\leq t$. If $m\geq 2$, choose the induced subgraph $T=G[V_m]$ and apply Lemma~\ref{lem:technicalfractionallemma} to find the desired fractional packing of $\sH$. This is allowed since (i) is satisfied by definition of the partition, (ii) holds because $t+1\geq 2$, (iii) holds because by induction $G-V(T)= G[\bigcup_{j=1}^{m-1} V_j]$ has fractional correspondence packing number $\leq t+1$, and (iv) holds because by assumption $\chi_c^{\bullet}(T)\leq t$.
\end{proof}

\begin{rem}
    From~\cref{lem:chicwidth}, it follows that the $\chi_c^{\bullet}$-width of a graph $G$ is either $\chi_c^{\bullet}(G)$ or $\chi_c^{\bullet}(G)-1$. 
    By replacing $\chi_c^{\bullet}$ with $\chi_c^{\star}$ in~\cref{def:chicbulletwidth} one could similarly define the \emph{$\chi_c^{\star}$-width} of a graph $G$. However, this integral analogue can be strictly below $\chi_c^{\star}(G)-1$, for instance when $G$ is a cycle. So far, the maximum possible difference between $\chi_c^{\star}$ and $\chi_c^{\star}$-width is unknown, even when restricting the definition to the case $m=2$ and $\abs{V_1}=1$, which corresponds with the change of $\chi_c^{\star}$ under addition of a universal vertex.
\end{rem}

As a first application of~\cref{lem:chicwidth}, we find that $\chi_c^{\bullet}(G)$ is bounded by the \emph{treedepth} of $G$, thus proving~\cref{thm:treedepth}. 

\begin{cor}\label{cor:treedepth}
For every graph $G$, $\chi_c^{\bullet}(G)$ is at most the \emph{treedepth} of $G$.
\end{cor}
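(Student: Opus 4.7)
The plan is to prove by induction on $k$ that every graph of treedepth $k$ has $\chi_c^{\bullet}$-width at most $k-1$, and then invoke~\cref{lem:chicwidth} to conclude that $\chi_c^{\bullet}(G)\leq k$. This fits the general philosophy advertised by~\cref{lem:chicwidth}: a Trémaux tree is precisely the tool that lets us peel off a very thin ``top layer'' (the roots) so that what remains splits into independent pieces of strictly smaller treedepth.

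Concretely, for the base case $k=1$ the graph is edgeless, so $\chi_c^{\bullet}(G)=1$. For the inductive step, fix $k\geq 2$ and assume the statement for treedepth $<k$. For each connected component $C_j$ of $G$, pick a Trémaux tree of height at most $k$ and let $r_j$ be its root. Set $V_1=\{r_j : j\}$ and let $V_2,\ldots,V_m$ be the vertex sets of the connected components of $G-V_1$. I would then verify the two conditions of~\cref{def:chicbulletwidth} with $t=k-1$. First, $G[V_1]$ has no edges (distinct roots lie in distinct components of $G$), so $\chi_c^{\bullet}(G[V_1])=1\leq k-1$. Second, each $V_i$ with $i\geq 2$ corresponds to a subtree hanging below some $r_j$; that subtree has height at most $k-1$ and is a Trémaux tree for $G[V_i]$, so $G[V_i]$ has treedepth at most $k-1$, and by induction $\chi_c^{\bullet}(G[V_i])\leq k-1$.

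For the layering condition, I would observe that any vertex $v\in V_i$ lies in a unique component $C_j$ of $G$, hence its only possible neighbours outside $V_i$ live inside $C_j$. The set $\bigcup_{j'<i}V_{j'}$ meets $C_j$ only in $\{r_j\}$, because the remaining $V_{j'}$'s with $j'\geq 2$ are connected components of $G-V_1$, hence pairwise non-adjacent in $G$. Therefore $v$ has at most the single neighbour $r_j$ in $\bigcup_{j'<i}V_{j'}$, as required. Applying~\cref{lem:chicwidth} yields $\chi_c^{\bullet}(G)\leq k$.

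The only mildly delicate point is handling a possibly disconnected $G$ uniformly, which is why I collect \emph{all} component-roots into $V_1$ in one shot (rather than peeling the root of a single component); this keeps $V_1$ independent and guarantees that edges between different $V_i$'s with $i\geq 2$ cannot exist. No serious technical obstacle arises, since~\cref{lem:chicwidth} already encapsulates the hard algorithmic work of combining the fractional packings across layers.
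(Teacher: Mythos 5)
Your proof is correct and follows essentially the same route as the paper: peel off the root(s) of a Trémaux tree as the first layer, note that every remaining vertex has at most one neighbour there, bound the treedepth of what remains by $k-1$, and conclude via \cref{lem:chicwidth}. The only (harmless) differences are that the paper reduces to the connected case and keeps $V(G)\setminus\{v\}$ as a single second layer, whereas you handle disconnectedness by collecting all component-roots into $V_1$ and splitting the remainder into components.
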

\begin{proof}
We proceed by induction on the treedepth $d$ of $G$. We may assume $G$ is connected, so that some supergraph of $G$ has a spanning tree $D$ of height $d$ such that every edge of $G$ joins an ancestor-descendant pair of $D$. Let $v$ be the root vertex and choose the vertex partition $V_1=\{v\}$, $V_2=V(G)-\{v\}$. Since $G[V_2]$ has treedepth $d-1$, by induction it has fractional correspondence packing number $\leq d-1$. Moreover, every vertex in $V_2$ has at most one neighbour in $V_1$. So $G$ has $\chi_{c}^{\bullet}$-width at most $d-1$ and by Lemma~\ref{lem:chicwidth} $\chi_c^{\bullet}(G)\leq d$.
\end{proof}

Implicitly, \cref{lem:chicwidth} has already been applied in~\cite[Thm.~9.3]{CCZ23}, where it was proved that every planar graph $G$ of girth at least $6$ has $\chi_c^{\bullet}$-width at most two, and hence $\chi_c^{\bullet}(G)\leq 3$.\\

Next, we consider the Cartesian product of graphs. In~\cite{CCZ23} it was already observed that taking the Cartesian product with a tree increases $\chi_c^{\bullet}$ by at most one. For illustration purposes, we reprove it here with the terminology of $\chi_c^{\bullet}$-width.
\begin{cor}[\cite{CCZ23}]\label{cor:cartesianproductwithttree}
For every graph $G$ and every tree $F$, 
$$\chi_c^{\bullet}(G \square F) \leq \chi_c^{\bullet}(G)+1.$$
\end{cor}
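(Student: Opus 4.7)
The plan is to apply~\cref{lem:chicwidth} directly, by exhibiting a natural layered decomposition of $G\square F$ into copies of $G$ indexed by the vertices of the tree $F$. Set $t=\chi_c^\bullet(G)$; the goal is to show $G\square F$ has $\chi_c^\bullet$-width at most $t$, from which the corollary follows since then $\chi_c^\bullet(G\square F)\leq t+1$.

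I would root $F$ at an arbitrary vertex and order its vertices $f_1,f_2,\ldots,f_n$ so that for every $i\geq 2$, the vertex $f_i$ has exactly one neighbour (its parent) among $\{f_1,\ldots,f_{i-1}\}$; such an ordering exists because $F$ is a tree (e.g.\ a BFS or DFS ordering from the root). If $F$ is disconnected (a forest), we concatenate the orderings coming from each component, which preserves the property above. Now define $V_i = V(G)\times\{f_i\}$ for $i=1,\ldots,n$, giving a partition of $V(G\square F)$.

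It remains to verify the two conditions of~\cref{def:chicbulletwidth}. By the definition of Cartesian product, the induced subgraph $(G\square F)[V_i]$ is isomorphic to $G$, so $\chi_c^\bullet((G\square F)[V_i])=t$. For the second condition, a vertex $(g,f_i)\in V_i$ has as neighbours in $G\square F$ exactly the vertices of the form $(g',f_i)$ with $gg'\in E(G)$ (which lie in $V_i$) and the vertices $(g,f_j)$ with $f_jf_i\in E(F)$. Among those $f_j$'s, precisely one—the parent of $f_i$—lies in $\{f_1,\ldots,f_{i-1}\}$, so $(g,f_i)$ has exactly one neighbour in $\bigcup_{j<i}V_j$. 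Hence the partition certifies $\chi_c^\bullet$-width at most $t$, and~\cref{lem:chicwidth} finishes the proof.

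There is no real obstacle here: the whole point of introducing $\chi_c^\bullet$-width was to make this kind of argument clean. The only small subtlety is that~\cref{def:chicbulletwidth} requires $t\geq 1$, which is automatic unless $G$ is empty, in which case $G\square F$ is also empty and the statement is vacuous.
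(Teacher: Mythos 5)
Your proof is correct and follows essentially the same route as the paper: order the vertices of $F$ so that each has at most one earlier neighbour, take the layers $V_i = V(G)\times\{f_i\}$, observe each induces a copy of $G$, and apply~\cref{lem:chicwidth} via the $\chi_c^{\bullet}$-width bound. The extra remarks about forests and the degenerate empty case are harmless but not needed for the statement as given.
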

\begin{proof}
Since $F$ is a tree, it admits a vertex ordering $v_1, \ldots, v_n$ such that every $v_i$ has at most one neighbour $v_j$ with $j<i$. Consider the vertex partition $V_1\cup \ldots \cup V_n$ of $G \square F$ given by $V_i:=V[G \square \{v_i\}]$, for all $i$. Then $G[V_i]$ is isomorphic to $G$ for all $i$, and each vertex in $V_i$ has at most one neighbour in $\bigcup_{j=1}^{i-1}V_j$. So $G\square F$ has $\chi_c^{\bullet}$-width at most $\chi_c^{\bullet}(G)$ and we can apply~\cref{lem:chicwidth}.
\end{proof}

\begin{cor}\label{cor:cartesin_for_n_trees}
For any $n$ trees $T_1,T_2,\ldots, T_n$, 
$$\chi_{\ell}(T_1\square T_2\square \ldots \square T_n)\leq \chi_c^{\bullet}(T_1\square T_2\square \ldots \square T_n)\leq n+1.$$
Moreover, these become equalities when $T_1=T_2=\ldots=T_n$ is a sufficiently large star.
\end{cor}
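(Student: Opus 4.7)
The upper bound inequality $\chi_\ell \leq \chi_c^{\bullet}$ was recorded in the introduction. For the inequality $\chi_c^{\bullet}(T_1 \square \cdots \square T_n) \leq n+1$, I would induct on $n$, with Corollary~\ref{cor:cartesianproductwithttree} as the engine. The base case $n=1$ asks for $\chi_c^{\bullet}(T_1) \leq 2$, which is obtained by applying Corollary~\ref{cor:cartesianproductwithttree} with $G = K_1$ (whose fractional correspondence packing number equals $1$) and $F = T_1$. For the inductive step, set $G' := T_1 \square \cdots \square T_{n-1}$; applying Corollary~\ref{cor:cartesianproductwithttree} with that $G'$ and $F = T_n$ yields
$$\chi_c^{\bullet}(G' \square T_n) \leq \chi_c^{\bullet}(G') + 1 \leq n+1$$
by the induction hypothesis.

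For the \textbf{moreover}-part, combined with the upper bound it suffices to prove the lower bound $\chi_\ell(S_t^{\square n}) \geq n+1$ once $t = t(n)$ is sufficiently large, where $S_t$ denotes the star $K_{1,t}$. Indeed, this sandwiches $n+1 \leq \chi_\ell \leq \chi_c^{\bullet} \leq n+1$ and forces all three to be equal. The lower bound itself would proceed by a separate induction on $n$. The base case $n=1$ is immediate since $\chi_\ell(S_t) = 2$ for every $t \geq 1$. For the inductive step, fix a sufficiently large $s$ so that $G := S_s^{\square(n-1)}$ satisfies $\chi_\ell(G) \geq n$, witnessed by an $(n-1)$-fold list-assignment $L_0$ on $G$ admitting no proper $L_0$-coloring. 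I would then construct, for $t$ sufficiently large (depending on $|V(G)|$ and on the alphabet size of $L_0$), an $n$-fold list-assignment $L$ on $G \square S_t$ with no proper $L$-coloring. The construction follows the Borowiecki--Jendrol'--Kr\'al--Mi\v{s}kuf amplification: each leaf $\ell_k$ of $S_t$ encodes, via the choice of $L((v, \ell_k))$, a specific candidate coloring of the center layer $G \times \{c\}$; by taking $t$ larger than the number of such candidate colorings, a pigeonhole argument forces any hypothetical proper $L$-coloring to induce on $G \times \{c\}$ a proper $L_0$-coloring of $G$, contradicting the choice of $L_0$.

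The main obstacle lies in the bookkeeping for this amplification step: one must engineer the lists at both the center and the leaves of $S_t$ so that the pigeonhole closes cleanly. This is essentially equivalent to reproving the tightness part of Theorem~\ref{thm:cartesianproductList} in the special case where the $1$-degenerate graph is required to be a star (rather than an arbitrary tree). That special case is already implicit in \cite{BJKM06}, and their construction specialises to stars without difficulty, so the proof can be completed by citing and adapting their argument.
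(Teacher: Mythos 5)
Your proposal follows essentially the same route as the paper: the upper bound by iterating \cref{cor:cartesianproductwithttree} over the $n$ tree factors, and the sharpness by iterating the tightness part of \cref{thm:cartesianproductList}, which by \cite[Lem.~3]{BJKM06} can be realised with a large star when $d=1$. The only (cosmetic) point to tidy is that your induction may produce stars of different sizes at each step, so one should invoke subgraph-monotonicity of $\chi_{\ell}$ to pass to $n$ copies of the largest star, exactly as the paper does implicitly.
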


\begin{proof}
The upper bound follows from iteratively applying~\cref{cor:cartesianproductwithttree}. 
The sharpness is due to iteratively applying the more precise version of~\cref{thm:cartesianproductList},~\cite[Lem.~3]{BJKM06} where it was shown that $G_2$ (in~\cref{thm:cartesianproductList}) can be taken a large star when $d=1$. 
Taking $n$ times the largest star $T$ in the above procedure, we conclude for $T_1=T_2=\ldots=T_n=T$ that $\chi_{\ell}(T_1\square T_2\square \ldots \square T_n)=n+1.$
\end{proof}

For the Cartesian product of two arbitrary graphs $G_1,G_2$ we need to be more careful.

\begin{cor}\label{cor:combininglocaldegreewithcartesianproduct}
 For two graphs $G_1$ and $G_2$, let $\sH=(L,H)$ be a correspondence-cover of their Cartesian product $G_1\square G_2$, such that every vertex $(a,b) \in V(G_1 \square G_2)$ has a list $L(a,b)$ of size at least $\chi_c^{\bullet}(G_1) + \deg_{G_2}(b)$. Then $\sH$ has a fractional packing.
\end{cor}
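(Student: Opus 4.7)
The plan is to apply the technical induction tool~\cref{lem:technicalfractionallemma} together with an induction on $|V(G_2)|$. At each inductive step I peel off one entire row (a copy of $G_1$) corresponding to a carefully chosen vertex $b_0$ of $G_2$, taking $T := G_1 \square (G_2 - b_0)$ as the large induced subgraph and treating the complementary row $V_{b_0} := V(G_1) \times \{b_0\}$ as the part outside $T$. The key geometric observation that makes this work is that a vertex $(a,b) \in V(T)$ can have at most \emph{one} neighbour in $V_{b_0}$, namely $(a, b_0)$, so the structural hypothesis of~\cref{lem:technicalfractionallemma} is automatic for this choice of $T$ regardless of the structure of $G_2$.

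The base case $|V(G_2)| = 1$ is immediate: then $G_1 \square G_2 \cong G_1$ with lists of size $\geq \chi_c^{\bullet}(G_1)$, and the conclusion follows from the definition of $\chi_c^{\bullet}(G_1)$ together with~\cref{lem:monotonicityfractionalpacking}. For the inductive step I first invoke~\cref{lem:monotonicityfractionalpacking} to reduce to the tight case $|L(a,b)| = \chi_c^{\bullet}(G_1) + \deg_{G_2}(b)$ for every $(a,b)$, and then choose $b_0$ to be a vertex of \emph{maximum} degree in $G_2$. With this setup, conditions (i)--(iv) of~\cref{lem:technicalfractionallemma} unfold as follows. Condition (i) is verified by the geometric observation above. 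Condition (ii) holds because the maximum-degree choice of $b_0$ gives $|L(a,b)| = \chi_c^{\bullet}(G_1) + \deg_{G_2}(b) \leq \chi_c^{\bullet}(G_1) + \deg_{G_2}(b_0) = |L(a,b_0)|$ for every boundary vertex $(a,b)$, while the required $|L(a,b)| \geq 2$ follows from $\chi_c^{\bullet}(G_1) \geq 1$ and $\deg_{G_2}(b) \geq 1$ (the latter because $(a,b)$ is a boundary vertex, so $b \sim b_0$ in $G_2$). Condition (iii) holds because the restriction of $\sH$ to $V_{b_0}$ is a cover of $G_1$ with all lists of size at least $\chi_c^{\bullet}(G_1)$, which admits a fractional packing by definition and~\cref{lem:monotonicityfractionalpacking}. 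Condition (iv) holds by the inductive hypothesis applied to the smaller graph $G_2 - b_0$: in both cases $b \not\sim b_0$ (no reduction) and $b \sim b_0$ (list reduced by one), the prescribed list size in $(L_T, H_T)$ equals $\chi_c^{\bullet}(G_1) + \deg_{G_2 - b_0}(b)$, which is exactly the hypothesis for the smaller Cartesian product.

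The main delicate point is verifying condition (ii), since~\cref{lem:technicalfractionallemma} demands $|L(u)| \leq |L(v)|$ between the stripped-off row and its row-neighbours in $T$. This is precisely why the preliminary reduction to the tight local-degree list sizes via~\cref{lem:monotonicityfractionalpacking} is essential: without trimming every list down to exactly $\chi_c^{\bullet}(G_1) + \deg_{G_2}(b)$, a vertex $b$ of small $G_2$-degree but artificially large initial list could violate the required inequality, whereas in the tight case the constraint reduces cleanly to a maximum-degree choice of $b_0$ inside $G_2$. Once the tight case is established by induction, the full $\geq$ hypothesis of the corollary is recovered by one final application of~\cref{lem:monotonicityfractionalpacking}.
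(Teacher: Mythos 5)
Your proposal is correct and follows essentially the same route as the paper: reduce to tight list sizes via \cref{lem:monotonicityfractionalpacking}, induct on $|V(G_2)|$, peel off the row of a maximum-degree vertex $b_0$ by taking $T = G_1 \square (G_2 - b_0)$, and verify conditions (i)--(iv) of \cref{lem:technicalfractionallemma} exactly as the paper does. Your explicit justification of condition (ii) (including why $b \sim b_0$ forces $\deg_{G_2}(b) \geq 1$) is a slightly more detailed account of the step the paper states tersely, but the argument is the same.
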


\begin{proof}
For brevity we write $G:=G_1\square G_2$, with cover $\sH$. 
Here we may assume that $G_1$ and $G_2$ are connected.
By~\cref{lem:monotonicityfractionalpacking} we may assume that that each list $L(a,b)$ has size \emph{precisely} $\chi_c^{\bullet}(G_1) + \deg_{G_2}(b)$.
 We proceed by induction on the number of vertices $n$ of $G_2$. If $n=1$ then $G=G_1$ and each list has size $\chi_c^{\bullet}(G_1)$, so by definition $\sH$ has a fractional packing. So we may assume $n\geq 2$. Let $b_0$ be a vertex of $G_2$ of maximum degree, and consider the induced subgraph $T:= G_1 \square (G_2-\{b_0\})$ of $G$. An application of Lemma~\ref{lem:technicalfractionallemma} to $T$ will yield the desired fractional packing of $\sH$.\\
 
 So it suffices to check the conditions of Lemma~\ref{lem:technicalfractionallemma}, which we will do now. Condition (i) holds by definition of Cartesian product. Indeed, for a vertex $(a,b)$ in $V(T)$, its only potential neighbour outside $T$ is $(a,b_0)$. Condition (ii) holds because we chose $b_0$ of maximum degree in $G_2$ and because $\chi_c^{\bullet}(G_1)\geq 1$. Condition (iii) holds because $G-V(T)$ is isomorphic to $G_1$, so the restriction of $\sH$ to $G-V(T)$ (which has lists of size $\geq \chi_c^{\bullet}(G_1)$ for every vertex) has a fractional packing. Finally, condition (iv) holds by the induction hypothesis. Indeed, let $(L_T,H_T)$ be a cover of $T$ of the form prescribed in~\cref{lem:technicalfractionallemma}(iv). Let $(a, b)$ be a vertex of $T$. If $(a, b)$ has no neighbour outside $T$, then $b$ (as a vertex of $G_2$) does not have $b_0$ as a neighbour, so $$|L_T(a,b)|=|L(a,b)| \geq \chi_c^{\bullet}(G_1) + \deg_{G_2}(b)=\chi_c^{\bullet}(G_1) + \deg_{G_2-\{b_0\}}(b).$$
 On the other hand, if $(a,b)$ does have a neighbour outside $T$, then that neighbour must be $(a,b_0)$ (and $b_0$ is a neighbour of $b$ in $G_2$), so $$|L_T(a,b)|=|L(a,b)| -1 \geq \chi_c^{\bullet}(G_1) + \deg_{G_2}(b)-1 = \chi_c^{\bullet}(G_1) + \deg_{G_2-\{b_0\}}(b).$$
 As $T$ is the Cartesian product of $G_1$ and $G_2-\{b_0\}$, the latter having fewer than $n$ vertices, it follows by induction that $\sH_T$ has a fractional packing. This concludes the verification of the conditions (i)--(iv) of Lemma~\ref{lem:technicalfractionallemma}.
 \end{proof}

In particular~\cref{cor:combininglocaldegreewithcartesianproduct} implies that taking the Cartesian product with another graph can increase the fractional correspondence packing number by at most the maximum degree of that graph.
\begin{cor}\label{cor:cart_maxdegree}
 For every two graphs $G_1$ and $G_2$, 
 $$\chi_c^{\bullet}(G_1 \square G_2) \leq \chi_c^{\bullet}(G_1)+ \Delta(G_2).$$
\end{cor}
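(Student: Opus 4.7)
The plan is to derive this as an immediate consequence of \cref{cor:combininglocaldegreewithcartesianproduct}. That corollary already provides a local-degree strengthening: any correspondence-cover $\sH=(L,H)$ of $G_1 \square G_2$ in which each vertex $(a,b)$ has a list of size at least $\chi_c^{\bullet}(G_1)+\deg_{G_2}(b)$ admits a fractional packing. To obtain the bound $\chi_c^{\bullet}(G_1\square G_2)\leq \chi_c^{\bullet}(G_1)+\Delta(G_2)$, it suffices to observe that the uniform list size $k:=\chi_c^{\bullet}(G_1)+\Delta(G_2)$ is always enough to meet this local-degree hypothesis.

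Concretely, I would set $k=\chi_c^{\bullet}(G_1)+\Delta(G_2)$ and consider an arbitrary $k$-fold correspondence-cover $\sH=(L,H)$ of $G_1\square G_2$. For every vertex $(a,b)$ of $G_1\square G_2$ we have $\deg_{G_2}(b)\leq \Delta(G_2)$, and therefore
\[
|L(a,b)|=k=\chi_c^{\bullet}(G_1)+\Delta(G_2)\geq \chi_c^{\bullet}(G_1)+\deg_{G_2}(b).
\]
Hence the hypothesis of \cref{cor:combininglocaldegreewithcartesianproduct} is satisfied, and we conclude that $\sH$ admits a fractional packing. Since $\sH$ was an arbitrary $k$-fold correspondence-cover, the definition of $\chi_c^{\bullet}$ gives $\chi_c^{\bullet}(G_1\square G_2)\leq k$, which is the desired inequality.

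There is essentially no obstacle here: all the real work sits in \cref{cor:combininglocaldegreewithcartesianproduct}, whose inductive argument already handles the delicate step of removing a single ``slice'' $G_1\square\{b_0\}$ and invoking \cref{lem:technicalfractionallemma}. The corollary above is merely the bookkeeping step that replaces the local quantity $\deg_{G_2}(b)$ by its global upper bound $\Delta(G_2)$, so that the bound can be stated without reference to the chosen vertex.
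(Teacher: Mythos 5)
Your proposal is correct and matches the paper's approach exactly: the paper also derives \cref{cor:cart_maxdegree} as an immediate consequence of the local-degree statement in \cref{cor:combininglocaldegreewithcartesianproduct}, using precisely the observation that $\deg_{G_2}(b)\leq\Delta(G_2)$ for every vertex $b$. The paper in fact gives no separate written proof beyond this remark, so your bookkeeping step is exactly the intended argument.
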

This, and verbatim repeating the proof of~\cref{cor:combininglocaldegreewithcartesianproduct} with list-covers instead of correspondence-covers, yields~\cref{thm:cartesianproductFractPack}.

\begin{rem}
Somewhat symmetric to~\cref{thm:cartesianproductFractPack}, a straightforward generalisation of the proof of~\cref{cor:treedepth} yields $\chi_c^{\bullet}(G_1 \square G_2) \leq \chi_c^{\bullet}(G_1)+ \mathrm{td}(G_2)-1$ for every two graphs $G_1$ and $G_2$. The induction is again on the treedepth of $G_2$, but now choose the vertex partition with $V_1=G_1 \square \{v\}$ and $V_2=G_1 \square (V(G_2)-\{v\})$.
\end{rem}

\section{Upper bound in terms of pathwidth}\label{sec:pw+1}
In this section we prove~\cref{thm:pathwidth}.
Recall that $\chi_c^{\bullet}(G) \leq k$ if and only if for every $k$-fold correspondence-cover $(L,H)$ of $G$ there exists a probability distribution on independent transversals $I$ of $(L,H)$ such that at each vertex $v$, each colour in $L(v)$ appears with equal probability $1/k$ in $I$. Every independent transversal $I$ can be identified with a unique \emph{correspondence-colouring} $c:V(G) \rightarrow V(H)$ given by $c(v):=I\cap L(v)$. In this section we will find such a probability distribution by inductively constructing an explicit multiset of correspondence-colourings $\C:=\{c_1,c_2,\ldots c_{k\cdot m}\}$ such that for each $v\in V(G)$ and each $x\in L(v)$, there are precisely $\frac{|\C|}{k}$ colourings $c\in \C$ with $c(v)=x$.\\

\begin{defi}
A $q$-fold correspondence-cover $(L,H)$ of a graph $G$ \emph{has full matchings} if for every $uv\in E(G)$, the matching between $L(u)$ and $L(v)$ is perfect, i.e., of size $q$. 
\end{defi}

As a warm-up, we first give the proof of the following proposition, which treats the case $\mathrm{pw}(G) \le 2.$

\begin{prop}\label{prop:pathwidth<=2}
 For every graph $G$ with $\mathrm{pw}(G)\le 2$, $\chi_c^{\bullet}(G)\le \mathrm{pw}(G)+1$.
\end{prop}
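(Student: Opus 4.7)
The plan is induction on $|V(G)|$. By the monotonicity of $\chi_c^{\bullet}$ under taking subgraphs (any $k$-fold correspondence-cover of $G$ extends to a $k$-fold cover of any supergraph by adding arbitrary matchings), we may assume $G$ is itself a 2-caterpillar; by Lemma~\ref{lem:monotonicityfractionalpacking} the 3-fold correspondence cover may be assumed to have full matchings. Following the framework introduced just before the proposition, I would inductively construct a multiset $\mathcal{C}$ of $3m$ correspondence-colourings of $G$ such that each colour at each vertex is taken by exactly $m$ of them. The key strengthening of the inductive hypothesis is that, for a designated ``current active edge'' $B = \{a, b\}$ of the caterpillar (the edge along which the next vertex will be glued), the joint distribution of $(c(a), c(b))$ over $\mathcal{C}$ is uniform on the six pairs $(p, q) \in L(a) \times L(b)$ satisfying $q \neq \sigma_{ab}(p)$.

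For the base case $G = K_3$, one normalises two of the three list-matchings to the identity by relabelling, leaving only a single permutation $\tau \in S_3$ to vary. A six-case check (each amounting to a small LP) verifies the strengthened hypothesis. For the inductive step, let $v$ be the last vertex added to the caterpillar; it has exactly two neighbours $a, b$ in $G - v$, forming the current active edge $B$. Given an inductive multiset $\mathcal{C}'$ for $G - v$, each $c' \in \mathcal{C}'$ has an available-colour set $f(c') := L(v) \setminus \{\sigma_{av}(c'(a)), \sigma_{bv}(c'(b))\}$ of size $1$ or $2$, and we extend $c'$ by distributing weights across $f(c')$ so that the marginal at $v$ becomes balanced. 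Feasibility of this transportation LP follows from Hall's theorem together with the uniform joint on $B$: for each $x \in L(v)$, the $c'$'s for which $x$ is the \emph{only} available colour correspond to at most two specific allowed pairs in $L(a) \times L(b)$, each of weight $m/2$, summing to the tight value $m$.

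The main obstacle is maintaining the strengthened hypothesis: we must ensure the joint on the \emph{new} active edge $B'$ is again uniform. If $v$ is a pendant of the caterpillar, then $B' = B$ and the joint is preserved automatically. Otherwise $B' \in \{\{a, v\}, \{b, v\}\}$, and we exploit both the freedom to choose $B'$ and the slack in the extension LP to redistribute weights so that the joint on $B'$ is uniform. The delicate case is when the composed matching $\sigma_{bv} \circ \sigma_{ab} \circ \sigma_{av}^{-1}$ on the triangle $\{a, b, v\}$ is a 3-cycle: then uniform joints on all three edges of the triangle cannot coexist, but always on at least two; a small case analysis shows that one of $\{a, v\}, \{b, v\}$ always admits a valid choice of $B'$. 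This triangle-level analysis is the crux of the proof and is what makes the pathwidth bound more delicate than the coarser treedepth bound of Theorem~\ref{thm:treedepth}.
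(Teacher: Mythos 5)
Your overall strategy is the same as the paper's: reduce to $2$-caterpillars with full matchings, and inductively maintain a multiset of six (in your notation $3m$) correspondence-colourings whose restriction to the current active edge is uniform over the six admissible pairs --- this is exactly the paper's notion of a \emph{balanced} edge --- with the crux being the extension of these colourings across a triangle so that a prescribed edge of the new active triangle becomes balanced. Your Hall-type count showing the marginal at the new vertex $v$ can be made uniform is also correct (each of the at most two pairs in $L(a)\times L(b)$ that force a given colour at $v$ carries weight $m/2$, so no colour is over-demanded).

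The genuine gap is at the crux. You write that you ``exploit the freedom to choose $B'$'' and that your case analysis only shows that \emph{one of} $\{a,v\},\{b,v\}$ admits a valid choice of new active edge. But you have no such freedom: the new active edge is dictated by the caterpillar structure --- it is the $2$-clique-separator to which the \emph{next} vertex of the construction is glued --- and it may well be the one your disjunction discards. With only ``at least one of the two works'', the induction does not close. What is actually needed, and what is true (the paper verifies it in the table of \cref{fig:local_ext}, including the delicate case where the composed matching around the triangle is a $3$-cycle), is the stronger statement that \emph{each} of $\{a,v\}$ and $\{b,v\}$ can individually be made balanced by a suitable extension, even though they cannot both be balanced simultaneously. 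Once your key claim is upgraded from the disjunction to this ``each one separately'' form, your argument coincides with the paper's. A minor further omission: the case $pw(G)=1$ needs its own (easy) argument, e.g.\ via the degeneracy bound $\chi_c^{\bullet}\leq 2d$, since embedding a forest into a $2$-caterpillar only yields the bound $3$ rather than the required $2$.
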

\begin{proof}
When $\mathrm{pw}(G)=1$, $G$ is a forest and the result immediately follows from the fact that $\chi_c^{\bullet}$ is at most twice the degeneracy~\cite[Thm.~9]{CCDK21}.
For graphs with pathwidth two, we verify the sharp inequality $\chi_c^{\bullet}(G)\le \mathrm{pw}(G)+1$ by adapting the argument from~\cite[Thm.~3.2]{BMS22}.
It suffices to treat maximal graphs with pathwidth two, so we can assume that $G$ is a $2$-caterpillar. Furthermore, we may assume that the given $3$-fold correspondence-cover of $G$ has full matchings. To prove the proposition, we will construct $6$ correspondence-colourings such that for a special subset of \emph{balanced} edges $uv$ of $G$, their restrictions to $G[u,v]$ consist of all $3\cdot 2$ correspondence-colourings that are allowed by the perfect matching between $L(u)$ and $L(v)$. We will make sure that in the end, every vertex $v$ will be incident to some balanced edge; note that this implies the desired property that every colour $x\in L(v)$ appears in precisely two of the six correspondence-colourings. 

The main idea is that the $6$ possible correspondence-colourings on a balanced edge $ab$ can be extended to a common neighbour $d$ of $a$ and $b$, in such a way that $ad$ becomes a balanced edge as well. Similarly, one can extend to $d$ so that $bd$ becomes balanced. (But in general it is not possible to make both $ad$ and $bd$ balanced!).

This is sketched in~\cref{fig:local_ext}, where we can assume without loss of generality that the $3$-fold cover of the triangle has the full identity matching on the edges $ab,ad$.
Up to isomorphism there are three possible perfect matchings for the edge $bd$.
We consider the extended colourings, $c_1$ for the identity matching (which forbids the colourings $(1,1),(2,2),(3,3)$ on $bd$), and $c_2$ for the matching that forbids the colourings $(1,2)$, $(2,1)$ and $(3,3)$ on $bd$, and $c_3$ for the perfect matching forbidding $c(d)\equiv c(b)+1 \pmod 3$.
In every case, we can extend by assigning a colour to $d$, such that also one fixed edge (either $bd$ or $ad$) becomes balanced, i.e., such that the $6$ possible combinations appear when restricting the colourings to this edge. For $c_3$, the extension is different depending on the desired balanced edge (in the table column for $c_3(d)$ in~\cref{fig:local_ext}, this extension is described on the left for $bd$, and on the right for $ad$).

Now one can iteratively use this type of extension along a maximum $2$-path of $G$
, and next extend towards the vertices with degree two. In this way, every vertex becomes incident to some balanced edge.
\end{proof}

\begin{figure}[H]
\centering
 \begin{minipage}{\linewidth}
 \centering
 \begin{minipage}{0.35\linewidth}
 \centering
\begin{tikzpicture}
\foreach \x/\y in {0/0,2/1,0/2}{
\draw[fill] (\x,\y) circle (0.15);
}

\draw[dashed] (0,2)--(2,1);
\draw[thick] (2,1)--(0,0)--(0,2);

\node at (2.35,1){$d$};
\node at (0,2.35){$b$};
\node at (0,-0.35){$a$};

\end{tikzpicture} \end{minipage}
 \begin{minipage}{0.625\linewidth}
 \centering
\begin{tabular}{|c|c|c|c|c|}
 \hline
 c(a) & c(b) & $c_1(d)$ & $c_2(d)$ & $c_3(d)$\\
 \hline
 1 &2& 3 & 3 & 2 \\
 1 & 3&2& 2& 2 or 3\\
 2 &1& 3 & 3& 3 or 1 \\
 2 &3 &1&1 &3 \\
 3 & 1&2&1 &1 \\
 3 & 2&1&2 & 1 or 2 \\
 \hline
\end{tabular}
\end{minipage}
\end{minipage}

\caption{Local extension of six correspondence-colourings of a $2$-tree, from a balanced edge $ab$ to $d$, such that $ad$ or $bd$ becomes balanced as well. The bold edges indicate full identity matchings in the cover.
}\label{fig:local_ext}
\end{figure}

Before proving the general case, we state and prove the following lemma that contains the main ideas of extending the colourings.
The proof of~\cref{thr:chicbullet<pw+1} is then just the linear time algorithm for finding a fractional packing of any $(\mathrm{pw}(G)+1)$-fold correspondence-cover by applying the procedure of this lemma $O(n)$ many times.

\begin{lem}\label{lem:extensioncolouringswithin_Kp+1}
 Let $G \cong K_{p+1}$ be a complete graph with vertices $v_1,v_2, \ldots, v_p,v_{p+1}$. Let $(L,H)$ be a $(p+1)$-fold correspondence-cover of $G$ with full matchings.
 Assume there exists a set $\C$ of $(p+1)!$ distinct correspondence-colourings of $G-\{v_{p+1}\}$ such that for every $1\leq i \leq p$, the restrictions of $\C$ to $G[v_i,v_{i+1},\ldots, v_{p}]$ form a multiset $\C_{[i,p]}$ of $\frac{(p+1)!}{i!}$ distinct correspondence-colourings, each appearing with multiplicity $i!$.
 Then we can extend each colouring in $\C$ to $v_{p+1}$, in such a way that we obtain a set $\D_{[1,p+1]}$ of $(p+1)!$ distinct correspondence-colourings of $G$, such that for every $1\leq i \leq p+1$ the restrictions of $\D_{[1,p+1]}$ to $G[v_i,v_{i+1},\ldots, v_{p+1}]$ form a multiset $\D_{[i,p+1]}$ of $\frac{(p+1)!}{(i-1)!}$ distinct correspondence-colourings, each appearing with multiplicity $(i-1)!$.
\end{lem}

\begin{proof}
For $1\leq i \leq j \leq p+1$, let $G_{[i,j]}$ denote the induced subgraph $ G[v_i,v_{i+1},\ldots, v_{j}]$. We prove the following statement by induction on $i$, starting from $i=p+1$ descending to $i=1$:\\
There exists a multiset $\D_{[i,p+1]}$ of correspondence-colourings of $G_{[i,p+1]}$ such that
\begin{enumerate}[(i)]
    \item $\D_{[i,p+1]}$ is a multiset of $\frac{(p+1)!}{(i-1)!}$ distinct correspondence-colourings of $G_{[i,p+1]}$, each appearing with multiplicity $(i-1)!$, and
    \item 
    The restriction of $\D_{[i,p+1]}$ to $G_{[i+1,p+1]}$ is equal to $\D_{[i+1,p+1]}$, and
    \item The restriction of $\D_{[i,p+1]}$ to $G_{[i,p]}$ is  equal to $\C_{[i,p]}$.
\end{enumerate}
This implies the lemma, because the first and second part of the induction hypothesis ensure that the multisets $\D_{[i,p+1]}$ have the desired multiplicities, and the third part of the induction hypothesis with $i=1$ ensures that $\D_{[1,p+1]}$ indeed consists of extensions to $v_{p+1}$ of the colourings in $\C_{[1,p]}$. Note that the second part in fact implies that for all $j \in [i,p+1]$, the restriction of $\D_{[i,p+1]}$ to $G_{[j,p+1]}$ is equal to $\D_{[j,p+1]}$. \\

For the base case $i=p+1$, we can simply choose $\D_{[p+1,p+1]}$ to be the multiset containing each element of $L(v_{p+1})$ with multiplicity $p!$. These are colourings of $G_{[p+1,p+1]}=G[v_{p+1}]$.
For clarity we will treat the next case $i=p$ separately as well, even though it also follows from the induction argument below. There are precisely $(p+1)\cdot p$ colourings of $G_{[p,p+1]}$ that avoid the size $p+1$ matching between $L(v_p)$ and $L(v_{p+1})$ in the cover graph, and we let $\D_{[p,p+1]}$ be the multiset that contains each of them with multiplicity $(p-1)!$. Clearly the restriction of $\D_{[p,p+1]}$ to $G_{[p+1,p+1]}$ is equal to $\D_{[p+1,p+1]}$. And symmetrically, the restriction of $\D_{[p,p+1]}$ to $G_{[p,p]}=G[v_p]$ is equal to $\C_{[p,p]}$ as it contains every colour in $L(v_p)$ with multiplicity $p!$.\\

From now on we assume the induction hypothesis is true for all integers larger than $i$ and at most $p+1$. We want to prove it for $i$. For the induction step, we will construct $\D_{[i,p+1]}$ from $\C_{[i,p]}$ and $\D_{[i+1,p+1]}$.

By the conditions of the lemma, $Z:=\C_{[i+1,p]}$ consists of $(p+1)!/(i+1)!$ colourings of multiplicity $(i+1)!$, and its multiset of extensions $\C_{[i,p]}$ to $v_i$ consists of $(p+1)!/i!$ colourings of multiplicity $i!$. This implies that for every $\vec{c}:=(c(v_{i+1}), \ldots, c(v_p)) \in Z $ there is a set $S_{i}(\vec{c})$ of precisely $i+1$ possible colours $c(v_i)$ for $v_i$ such that $(c(v_i),c(v_{i+1}), \ldots, c(v_p)) \in \C_{[i,p]} $.

On the other hand, let $\D_{[i+1,p]}$ denote the multiset of restrictions of $\D_{[i+1,p+1]}$ to $G_{[i+1,p]}$. By the third part of the induction hypothesis, we have that $\D_{[i+1,p]}=Z$.

By the first part of the induction hypothesis (applied with $i+1$), we have that $\D_{[i+1,p+1]}$ consists of $(p+1)!/i!$ colourings of multiplicity $i!$. Similar to above we conclude that for every $\vec{c}=(c(v_{i+1}), \ldots, c(v_p)) \in Z $ there is a set $S_{p+1}(\vec{c})$ of precisely $i+1$ possible colours $c(v_{p+1})$ for $v_{p+1}$ such that $(c(v_{i+1}), \ldots, c(v_{p+1})) \in \D_{[i+1,p+1]} $.\\

To construct $\D_{[i,p+1]}$ we now want to extend each $\vec{c} \in Z$ to $\{v_i, v_{p+1}\}$ simultaneously, by using colour pairs from $S_i(\vec{c}) \times S_{p+1}(\vec{c})$ in some balanced manner.
One way to do that would be to select every pair equally often, but that is not allowed in general because in the cover graph, the matching between $L(v_i)$ and $L(v_{p+1})$ may forbid some of those pairs.
Let $M(\vec{c})$ be the submatching between $L(v_i) \cap S_i(\vec{c})$ and $L(v_{p+1}) \cap S_{p+1}(\vec{c})$ and augment it to an arbitrary auxiliary perfect matching $\overline{M}(\vec{c}) \supseteq M(\vec{c})$ of a complete bipartite graph with parts $S_i(\vec{c})$ and $S_{p+1}(\vec{c})$. Note that $\overline{M}(\vec{c})$ has size $|S_i(\vec{c})|=i+1$. This leaves $i(i+1)$ pairs for $(c(v_i), c(v_{p+1})) \in S_i(\vec{c}) \times S_{p+1}(\vec{c})$ which avoid $\overline{M}(\vec{c})$ and which we may use to extend $\vec{c}$ to $\{v_i, v_{p+1}\}$. Taking each of these $i(i+1)$ extensions with multiplicity $(i-1)!$, and this for every \emph{distinct} $\vec{c} \in Z$, yields our desired construction of $\D_{[i,p+1]}$. 
We stress that the matching $\overline{M}(\vec{c})$ purely serves as an auxiliary tool to extend one particular colouring $\vec{c}$ \emph{in a balanced manner} ensuring that each colour in $S_i(\vec{c}) \cup S_{p+1}(\vec{c})$ is chosen equally often; it does not impose any restrictions on which colours are allowed for extension in other steps of this proof.

The first part of the induction hypothesis holds because each of the $(p+1)!/(i+1)!$ distinct colourings $\vec{c} \in Z$ has been mapped to $i(i+1)$ distinct colourings of $G_{[i,p+1]}$ that only differ from each other on $\{v_i,v_{p+1}\}$, thus yielding $(p+1)!/(i-1)!$ distinct colourings of $G_{[i,p+1]}$.

The second part of the induction hypothesis is true because for every distinct $(\vec{c}, c(v_{p+1}))\in \D_{[i+1,p+1]}$ we extended $\vec{c}\in \C_{[i+1,p]}$ to $\{v_i,v_{p+1}\}$ in a balanced manner, by choosing colour $c(v_{p+1}) \in S_{p+1}(\vec{c})$ for $v_{p+1}$ with multiplicity $(i-1)!$ for each of the $i$ allowed choices of $c(v_{i})\in S_{i}(\vec{c})$. This ensured that $(\vec{c}, c(v_{p+1}))$ appears with multiplicity $i\cdot (i-1)! = i!$ in the restriction of $\D_{[i,p+1]}$ to $G_{[i+1,p+1]}$, as is required for $\D_{[i+1,p+1]}$.

The third part of the induction hypothesis is true for essentially the same reason, but we will write it out for completeness. For every distinct $(c(v_i),\vec{c})\in \C_{[i,p]}$ we extended $\vec{c}\in \C_{[i+1,p]}$ to $\{v_i,v_{p+1}\}$ in a balanced manner, by choosing colour $c(v_i) \in S_i(\vec{c})$ for $v_i$ with multiplicity $(i-1)!$ for each of the $i$ allowed choices of $c(v_{p+1})\in S_{p+1}(\vec{c})$. This ensured that $(c(v_i),\vec{c})$ appears with multiplicity $i\cdot (i-1)! = i!$ in the restriction of $\D_{[i,p+1]}$ to $G_{[i,p]}$, as is required for $\C_{[i,p]}$.
\end{proof}

\begin{thr}\label{thr:chicbullet<pw+1}
 For every graph $G$, $\chi_c^{\bullet}(G)\le \mathrm{pw}(G)+1$.
\end{thr}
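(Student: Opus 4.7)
The plan is to construct an explicit multiset $\C$ of exactly $(k+1)!$ proper correspondence-colourings of $G$ in which every colour at every vertex appears with multiplicity $k!$. This gives a fractional packing with uniform probability $1/(k+1)$ and hence $\chi_c^{\bullet}(G)\le k+1$. Using~\cref{lem:monotonicityfractionalpacking} I will first reduce to the case that $G$ is a $k$-caterpillar with $k=pw(G)$ and that the given $(k+1)$-fold correspondence-cover has full matchings.

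Fix a construction of the caterpillar as a sequence of active cliques $A_1,A_2,\ldots,A_M$ (with $A_1\cong K_{k+1}$, separators $B_m=A_m\cap A_{m+1}$ of size $k$, and new vertices $x_m=A_{m+1}\setminus A_m$). I would maintain the invariant that after processing step $m$, the multiset $\C$ (of size $(k+1)!$) has its restriction to $A_m$ satisfying the conclusion of~\cref{lem:extensioncolouringswithin_Kp+1} with respect to an ordering of $A_m$ whose first element is $d_m:=A_m\setminus B_m$, the vertex dropped at step $m$. The inductive step from $m$ to $m+1$ is then immediate: the restriction of $\C$ to $B_m$ satisfies exactly the hypothesis of~\cref{lem:extensioncolouringswithin_Kp+1} (on the last $k$ positions of the ordering, with the role of $v_{p+1}$ played by $x_m$), and one application of the lemma produces an extended multiset on $V(G_m)\cup\{x_m\}$ whose restriction to $A_{m+1}=B_m\cup\{x_m\}$ satisfies the post-property with the shifted ordering. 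Along the $k$-path spine of the caterpillar, the drops $d_m$ are exactly the vertices of the spine in left-to-right order, so the invariant propagates for free.

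Two issues remain: the initialisation on $A_1$ and the treatment of pendants (the vertices $x_m$ with $B_{m+1}=B_m$, which would be dropped immediately at the next step and therefore violate the ``first position'' invariant). For pendants, I would observe that the hypothesis needed for a single application of~\cref{lem:extensioncolouringswithin_Kp+1} to extend $\C$ to a pendant attached to some $B_m$ is preserved on $B_m$ from the moment $B_m$ first became a separator, since every subsequent lemma application alters only the colour at a newly-added vertex and thus leaves the restriction to any previously-seen set unchanged. Hence pendants can be appended at the end, after the spine has been fully traversed. For the initialisation on $A_1=K_{k+1}$, I would exploit the alternative vertex ordering arising from a reversed caterpillar construction (as hinted in~\cref{sec:defnot}) to choose the construction direction so that the initial clique's ordering is compatible with the inductive procedure, and carry out the initialisation by iterating the same sort of balanced extension as in the proof of~\cref{lem:extensioncolouringswithin_Kp+1}: start from a multiset on the singleton $\{v_{k+1}\}$ in which each of the $k+1$ colours is repeated $k!$ times, and append $v_k,v_{k-1},\ldots,v_1$ one at a time. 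The main obstacle is to verify that this multiplicity-balanced extension can always be carried out within $K_{k+1}$ for arbitrary full matchings, even when the matchings produce asymmetric numbers of forbidden colours at an intermediate step; I expect this to follow from the same careful counting as in the proof of~\cref{lem:extensioncolouringswithin_Kp+1}, or from a direct argument tailored to $K_{k+1}$ analogous to the explicit construction in the proof of~\cref{prop:pathwidth<=2}.
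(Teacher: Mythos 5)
Your overall architecture is the same as the paper's: reduce to $k$-caterpillars with full matchings, maintain a ``fully balanced'' active clique, and extend one vertex at a time via \cref{lem:extensioncolouringswithin_Kp+1}. However, the step you dismiss as propagating ``for free'' is exactly where the real work lies, and as written it fails. The lemma outputs a fully balanced ordering of $A_{m+1}$ with the new vertex $x_m$ in the \emph{last} position, whereas your invariant at the next step needs a fully balanced ordering whose \emph{first} element is $d_{m+1}$, the next vertex to be dropped. Full balancedness is a property of the nested family of suffix sets, so it is not preserved under reordering, and two things go wrong. First, $d_{m+1}$ need not be the first element of the inherited suffix of $B_m$: which vertex of $A_{m+1}$ is dropped next is dictated by the \emph{future} of the construction, so the orderings of all active cliques must be fixed in advance according to the order in which vertices \emph{leave} the active set (the ordering $\overrightarrow{O_r}$ of \cref{sec:defnot}). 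Your remark that ``the drops $d_m$ are exactly the vertices of the spine in left-to-right order'' conflates this leave order with the order $O_f$ in which vertices are added; the paper explicitly warns these can differ, and indeed an old vertex of $B_m$ can outlive $x_m$ even when $x_m$ is not a pendant. Second, even once the leave order is fixed, the new vertex $x_m$ must in general be \emph{inserted in the middle} of $\overrightarrow{A_{m+1}}$, not appended at the end where the lemma places it. The paper closes this with a short but indispensable argument: every suffix of $\overrightarrow{A_{m+1}}$ coincides, as a set, either with a suffix of $\overrightarrow{B_m}$ (if it avoids $x_m$) or with a suffix of $(\overrightarrow{B_m},x_m)$ (if it contains $x_m$), and these are balanced by the induction hypothesis and by the lemma respectively. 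Without this observation your induction does not close.

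The remaining points are minor. Deferring pendants to the end is workable for the reason you give, though unnecessary: in the leave order a pendant is simply the first element of its unique active set, so the paper's induction treats it uniformly. Your initialisation is also essentially the paper's: one iterates the extension process of \cref{lem:extensioncolouringswithin_Kp+1} $k$ times, building $A_1$ one vertex at a time in the order $\overrightarrow{A_1}$; this is an easier instance of the same counting (the lists are larger than the clique being built at each intermediate stage), so the ``main obstacle'' you flag is not where the difficulty of the theorem sits.
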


\begin{proof}
Let $p:=\mathrm{pw}(G)$. It is sufficient to consider $p$-caterpillars $G$ (by~\cite{proskurowski1989maximal}), whose $(p+1)$-fold correspondence-covers have full matchings. We will prove that there are $(p+1)!$ proper correspondence-colourings, such that every vertex is coloured with each of its possible colours $p!$ many times.
First we set up some notation. Let $n$ be the number of vertices of $G$ and let $A_1,A_2,\ldots, A_{n-p}$ be a collection of active vertex sets that constructs the $p$-caterpillar (see~\cref{sec:defnot} for the definition of active sets), and recall that they are each of size $p+1$.
Let $O_f:=v_1v_2\ldots v_n$ be the order in which the vertices are added, where 
$A_1=\{v_1,\ldots,v_{p+1}\}$ and 
$v_{p+i}=A_{i}\setminus A_{i-1}$ for all $i>1$. The graph can also be constructed by going through the active sets in the reverse order
$A_{n-p},\ldots, A_2,A_1$. This yields another vertex ordering $O_r$, starting with the vertices of $A_{n-p}$
, followed by the vertices $A_{n-p-1}\setminus A_{n-p}, \ldots, A_{2}\setminus A_3, A_1\setminus A_2$.
We also define the opposite ordering of $O_r$, which is
$\overrightarrow{O_r}=A_1\setminus A_2, A_2\setminus A_3,\ldots, A_{n-p-1} \setminus A_{n-p}, \overrightarrow{A_{n-p}}$. Here $\overrightarrow{A_{n-p}}$ denotes $A_{n-p}$ equipped with the order that is opposite of the order in $O_r$.
More generally, we write $\overrightarrow{S}$ for a vertex subset $S$ equipped with the ordering inherited from $\overrightarrow{O_r}$. 
Note that $O_f$ can be very different from $\overrightarrow{O_r}$ (and also different from $O_r$).
Whereas $O_f$ is the order in which the vertices enter the active set,
$\overrightarrow{O_r}$ is the order in which they leave the active set. 
The $p$-caterpillar could be such that some vertices leave the active set in the iteration immediately after they entered it (these are the $p$-leaves), while
other vertices stay in the active set for a large number of iterations before they finally leave.
The key property of $\overrightarrow{O_r}$ that we will
use is that $A_i\setminus A_{i+1}$ is the lowest element in $\overrightarrow{A_{i}}$, for all $i$.

Suppose we are given a subgraph $\Gamma$ of $G$, an ordered subset $\overline{S}:=s_1,s_2,\ldots, s_{|S|}$ of at most $p+1$ vertices of $\Gamma$, and a collection $\C$ of $(p+1)!$ distinct correspondence-colourings of $\Gamma$. 
Then we say that $S$ is \emph{balanced} if the restriction of $\C$ to $\Gamma[S]$ is a multiset consisting of $\frac{(p+1)!}{(p+1-|S|)!}$ colourings, each appearing with multiplicity $(p+1-|S|)!$. Moreover, $\overline{S}$ is \emph{fully balanced} if $\{s_j,s_{j+1},\ldots, s_{|S|}\}$ is balanced for all $1\leq j \leq |S|$.\\

We now prove by induction on $i$ that there are $(p+1)!$ correspondence-colourings of $G_i:=G[\bigcup_{j=1}^{i} A_j]$, such that $\overrightarrow{A_j}$ is fully balanced for every $1\leq j \leq i$. As $G=G_{n-p}$ this will prove the theorem. We stress that we are performing induction in the order governed by $O_f$, but that the balancedness of the subsets $A_i$ will be derived with respect to the order $\overrightarrow{O_r}$.\\

To derive the initial condition for $i=1$, the proof of~\cref{lem:extensioncolouringswithin_Kp+1} can be applied $p$ times. 
We have $A_1=\{v_1,\ldots, v_{p+1}\}$ and we write $\overrightarrow{A_1}=\{v_{j_1},\ldots, v_{j_{p+1}}\}$. Start with the $p+1$ possible colours of $v_{j_1}$, each appearing with multiplicity $p!$, so that $\{v_{j_1}\}$ becomes (fully) balanced. One can then successively extend to $G[v_{j_1},v_{j_2}], G[v_{j_1},v_{j_2},v_{j_3}],\ldots, G[A_1]$, ultimately making $\overrightarrow{A_1}$ fully balanced in $G_1$.
(If the reader is uncomfortable with this argument via the proof of~\cref{lem:extensioncolouringswithin_Kp+1}, here is an alternative way to obtain the initial condition. First extend $G$ to a larger $p$-caterpillar $G^{*}$ by appending vertices $v_{-p},\ldots,v_{-1},v_{0}$ at the start, and choosing a full identity matching in the cover for every new edge $e\in E(G^*)\setminus E(G)$. Because the new matchings are full identity, the complete subgraph  $G^*[v_{-p}\ldots, v_{-1},v_0]$  clearly has precisely $(p+1)!$ correspondence-colourings with respect to which $\overrightarrow{\{v_{-p}, \ldots, v_{-1},v_0 \}}$ is fully balanced. Then we may relabel $v_{i}$ to $v_{i+p+1}$ for all $i\geq -p$ and apply the argument below to $G^*$ instead of $G$, which is sufficient since $G$ is a subgraph of $G^*$.)\\

Next, assume that the induction hypothesis holds for $G_i$, for some $1\leq i \leq n-p-1$. We prove it for $i+1$. 
Since $\overrightarrow{A_i}$ is fully balanced and (crucially) since $A_i\setminus A_{i+1}$ is the lowest element in the order of $\overrightarrow{A_i}$, it follows that $\overrightarrow{A_i \cap A_{i+1}}$ is fully balanced as well.
Therefore~\cref{lem:extensioncolouringswithin_Kp+1} can be applied to the ordered clique $G_i[\overrightarrow{A_i \cap A_{i+1}}]$, which yields that the correspondence-colourings of $G_i$ can be extended to $G_{i+1}$ by colouring $v_{p+i+1}=A_{i+1}\setminus A_i$ such that $(\overrightarrow{A_i \cap A_{i+1}},v_{p+i+1})$
becomes fully balanced. Note that this is $A_{i+1}$ but equipped with a different ordering than we need.
So from this we need to derive that $\overrightarrow{A_{i+1}}$ is fully balanced as well.

Write $\overrightarrow{A_{i} \cap A_{i+1}}:=a_1,a_2,\ldots,a_{p}$ and $\overrightarrow{A_{i+1}}:=a_1,a_2,\ldots, a_{j}, v_{p+i+1}, a_{j+1},\ldots,a_{p}$.
Then for every $r\geq j+1$, we have that $\{a_{r}, a_{r+1},\ldots, a_{p}\}$ is balanced because $\overrightarrow{A_i \cap A_{i+1}}$ is fully balanced. Moreover, for every $1 \leq r \leq j$ we have that $\{a_r,\ldots a_j,v_{p+i+1},a_{j+1},\ldots, a_{p}\}=\{a_r,\ldots, a_{p}, v_{p+i+1}\}$ is balanced because $(\overrightarrow{A_i \cap A_{i+1}},v_{p+i+1})$ is fully balanced. For the same reason, $\{v_{p+i+1},a_{j+1},\ldots, a_{p}\}$ is balanced. By definition, it follows that $\overrightarrow{A_{i+1}}$ is fully balanced.
None of the previous active sets $\overrightarrow{A_j}$ (for $j\leq i$) contains $v_{p+i+1}$, so they remain fully balanced after the extension of the colourings to $v_{p+i+1}$. So the statement holds for $G_{i+1}$, as desired.
\end{proof}

\begin{examp}
An example of a graph with pathwidth three is given in~\cref{fig:constr_pw3}, where $O_f=v_1v_2v_3v_4v_5v_6v_7uv_8v_9$ and $O_r=v_9v_8v_7v_6v_4uv_5v_3v_2v_1$ (note that $u$ is the only pendant vertex not on the $3$-path).
In this case,~\cref{lem:extensioncolouringswithin_Kp+1} is applied consecutively to $$G[v_1v_2v_3v_4], G[v_2v_3v_5v_4], G[v_3v_5v_4v_6], G[v_5v_4v_6v_7], G[uv_4v_6v_7], G[v_4v_6v_7v_8], G[v_6v_7v_8v_9].$$
We sketch a situation for one such step.
Assume there are identity matchings on $v_1v_2, v_1v_3, v_2v_3$ and $v_3v_4$,
and the matching between $v_2$ and $v_4$ implies $c(v_4) \not\equiv c(v_2)+1 \pmod 4.$
Then the $24$ colourings for $G[v_1v_2v_3]$ are easy to determine (each of the vertices $v_1, v_2$ and $v_3$ gets assigned a different colour from $[4]$).
In~\cref{tab:initialisation}, we present how to extend these colourings to $v_4.$
Given $c(v_3)$, there are three possible colours for $v_4$ (which will appear each twice for that choice of $c(v_3)$).
Among the choices where $c(v_3)=1$, we consider the additional colour for $c(v_2)$ and list the choices for $c(v_4)$. To aim for a balanced situation, we need to forbid the colour $2$ ourselves in the last case.
Since $c(v_1)$ is known as well, we can consider to match them, which is always possible.
In the case where we have the identity matching between $L(v_1)$ and $L(v_4)$ as well, the colourings $(c(v_1),c(v_2),c(v_3),c(v_4))$ for which $c(v_3)=1$ will be 
$$(3,2,1,4),(4,2,1,2),(2,3,1,3),(4,3,1,2),(2,4,1,3),(3,4,1,4).$$
The other $18$ colourings are determined analogously,
and also the extension towards the next vertices $v_i$ works the same.

\begin{table}[h]
 \centering
\begin{tabular}{|c|c|}
 \hline
 $c(v_3)$ & $c(v_4)$ \\
 \hline
 1 & \{2,3,4\}\\
 2& \{1,3,4\}\\
 3& \{1,2,4\}\\
 4& \{1,2,3\}\\
 \hline
\end{tabular}\quad
\begin{tabular}{|c|c|c|}
 \hline
 $(c(v_3),c(v_2))$ & $c(v_4)$ & $c(v_1)$ \\
 \hline
 (1,2) & \{2,4\}& \{3,4\}\\
 (1,3)& \{2,3\}& \{2,4\}\\
 (1,4)& \{\textcolor{red}{2,}3,4\}& \{2,3\}\\
 \hline
\end{tabular}
\caption{Determining the colourings iteratively}
\label{tab:initialisation}
\end{table}

\begin{figure}[ht]
 \centering
 \begin{tikzpicture}[scale=1.5]
 \definecolor{cv0}{rgb}{0.0,0.0,0.0}
 \definecolor{c}{rgb}{1.0,1.0,1.0}

 \Vertex[L=\hbox{$v_1$},x=-2,y=0]{v0}
 
 \Vertex[L=\hbox{$v_2$},x=-1.73,y=1]{v1}
 \Vertex[L=\hbox{$v_3$},x=-1,y=1.73]{v2}
 
 \Vertex[L=\hbox{$v_5$},x=0,y=2]{v3}
 
 \Vertex[L=\hbox{$v_4$},x=0,y=0]{v4}

 \Vertex[L=\hbox{$v_6$},x=1,y=1.73]{v5}

 \Vertex[L=\hbox{$v_8$},x=3,y=1.73]{v7}
 \Vertex[L=\hbox{$v_7$},x=2.,y=0]{v6}
 \Vertex[L=\hbox{$v_9$},x=4,y=0]{v8}

 \Vertex[L=\hbox{$u$},y=0.3,x=1.2]{u}

 \draw[line width=0.65mm] (v2)--(v4)--(v0)--(v1)--(v2)--(v3)--(v5)--(v7)--(v8)--(v6)--(v4)--(v1);
 \draw[line width=0.65mm] (v3)--(v4)--(v5)--(v6)--(v7);
\draw[line width=0.65mm,color=blue,style=dashed] (v1)--(v3)--(v6);
\draw[line width=0.65mm,color=blue,style=dashed] (v0)--(v2)--(v5)--(v8);
\draw[line width=0.65mm,color=blue,style=dashed] (v4)--(v7);
\draw[line width=0.5mm,color=red,style=dotted] (v4)--(u)--(v5);
\draw[line width=0.5mm,color=red,style=dotted] (v6)--(u);
 \end{tikzpicture}
 \caption{A graph with pathwidth three.\label{fig:constr_pw3}}
\end{figure}
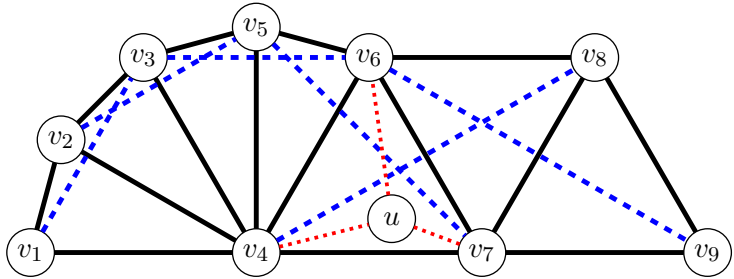
\end{examp}

\begin{rem}

 A key feature of our strategy in~\cref{lem:extensioncolouringswithin_Kp+1} and its application in~\cref{thr:chicbullet<pw+1} is that
we make the correspondence-colourings balanced ``in only one direction'', based on the order in which the $p$-caterpillar is constructed.
It is tempting to think that at the expense of increasing the list sizes we might ignore this order,
thus enabling the construction of balanced colourings on an arbitrary $p$-tree.
This would allow an upper bound on $\chi_c^{\bullet}$ in terms of treewidth. 
Surprisingly, this strategy fails spectacularly.
More specifically,~\cref{lem:extensioncolouringswithin_Kp+1} yields balanced colourings on any subset of the form $\{v_i,v_{i+1},\ldots,v_{p+1}\}$ but it becomes false if
instead we want balanced colourings on \emph{every} subset of $\{v_1,\ldots,v_{p+1}\}$.
It does not even work for $p=2$. Indeed, for any $q\geq 3$ there is a $q$-fold cover $\sH$ of the triangle $v_1v_2v_3$ that does not admit a collection of $q(q-1)(q-2)$ correspondence-colourings such that every edge is coloured with $q(q-1)$ different correspondence-colourings, each with multiplicity $q-2.$ 
For $\sH$, one can take the identity matchings on $v_1v_2$ and $v_1v_3$,
and the cyclically shifted matching on $v_2v_3$ consisting of edges $1_{v_2}2_{v_3}, 2_{v_2}3_{v_3}, \ldots, (q-1)_{v_2}q_{v_3}, q_{v_2}1_{v_3}$.
\end{rem}

\section{A flexibility bound in terms of degeneracy}\label{sec:degeneracy}
In this section we prove~\cref{thm:wflex_ddegenerate}. Recall that a graph $G$ is \emph{weighted $\epsilon$-flexible} with respect to a list-assignment $L$ if there exists a probability distribution on $L$-colourings $c$ of $G$, such that for every vertex $v\in V(G)$ and every colour $x\in L(v)$, $\mathbb{P}(c(v)=x) \geq \epsilon$. 
Dvo\v{r}\'{a}k, Norin and Postle~\cite{DNP19} proved that for every fixed $d$ there exists an (extremely small) $\epsilon_d$ such that every $d$-degenerate graph is weighted $\epsilon_d$-flexible with respect to every $(d+2)$-fold list-assignment.
Later, Kaul, Mathew, Mudrock and Pelsmajer~\cite{KMMP22} proved that the graph is (nonweighted!) $\epsilon_d$-flexible with the much better value $\epsilon_d= (\frac{1}{2})^{d+1}$. In this section we derive the same for \emph{weighted} $\epsilon_d$-flexibility. We do so via a local argument, allowing different list sizes for different vertices. Moreover, the result holds in the context of correspondence colouring. First we need two definitions.

\begin{defi}
A correspondence-cover $\sH=(L,H)$ of a graph $G$ has \emph{profile} $\mathcal{S}:=(s_v)$ if $|L(v)|=s_v$ for every vertex $v\in V(G)$.
\end{defi}
\begin{defi}
Given are a correspondence-cover $\sH$ of a graph $G$ and a collection $\mathcal{E}:=(\epsilon_v)$ of numbers $\epsilon_v\in [0,1]$ associated with the vertices $v\in V(G)$. Then $G$ is \emph{weighted $\mathcal{E}$-flexible with respect to $\sH$} if: there exists a probability distribution on independent transversals $I$ of $\sH$ such that $\mathbb{P}(x\in I)\geq \epsilon_v$ for all $v\in V(G)$. Moreover, $G$ is \emph{weighted $\mathcal{E}$-flexible} with respect to profile $\mathcal{S}$ if it is weighted $\mathcal{E}$-flexible with respect to every correspondence-cover with profile $S$.
\end{defi}
In terms of these two definitions, we can restate the following result from~\cite[Lem.~20]{CCDK23}.

\begin{lem}[\cite{CCDK23}]\label{lem:localdegreebound} 
Every graph $G$ is weighted $(\frac{1}{\deg(v)+1})-$flexible with respect to the profile $(\deg(v)+1)$.
\end{lem}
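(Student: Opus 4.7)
The plan is to induct on $|V(G)|$, constructing the distribution by first sampling the colour at a max-degree vertex and then applying the induction hypothesis to $G - v$ with a carefully adjusted cover. Fix a correspondence-cover $\sH=(L,H)$ of $G$ with $|L(w)|=\deg_G(w)+1$ for every $w$. If $G$ has no edges, assign each vertex its unique colour deterministically; otherwise, let $v$ be a vertex of maximum degree of $G$.

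First I would sample $x \in L(v)$ uniformly. For each outcome $x$ I form the cover $\sH_x$ of $G-v$ by restricting $H$ and deleting, from each neighbour $u$'s list, the unique colour $y_u$ (if any) matched to $x$ via $M_{vu}$. A quick check shows that the resulting profile on $G-v$ is exactly $(\deg_{G-v}(w)+1)_w$: lists at neighbours of $v$ drop from $\deg_G(u)+1$ to $\deg_G(u)=\deg_{G-v}(u)+1$, while lists at non-neighbours are untouched. The induction hypothesis applied to $G-v$ then produces a distribution $\mathbb{P}_x$ on independent transversals of $\sH_x$ whose marginals are at least $\tfrac{1}{\deg_{G-v}(w)+1}$. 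I would then declare the final distribution to be: draw $x$ uniformly from $L(v)$, then sample from $\mathbb{P}_x$.

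The remaining task is to verify the marginal lower bound $\tfrac{1}{\deg_G(w)+1}$ at each vertex. For $w=v$ it holds by uniform sampling. For any non-neighbour $w$ the list is unchanged by every outcome of $x$, so the bound follows directly from induction. For a neighbour $u$ of $v$ and $y \in L(u)$, the matching $M_{vu}$ forbids $y$ for at most one choice of $x$; for each of the remaining at least $\deg_G(v)$ choices, induction gives conditional probability at least $\tfrac{1}{\deg_G(u)}$, so overall
\[
\mathbb{P}(c(u)=y) \;\geq\; \frac{\deg_G(v)}{\deg_G(v)+1}\cdot\frac{1}{\deg_G(u)}.
\]
This is at least $\tfrac{1}{\deg_G(u)+1}$ precisely when $\deg_G(v) \geq \deg_G(u)$, which is guaranteed by our maximum-degree choice of $v$. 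The single delicate step in the argument is exactly this algebraic inequality, and it is essentially tight: a worse choice of $v$ would fail to absorb the loss from the single forbidden colour at $u$, so exploiting the freedom to pick a maximum-degree vertex is what makes the induction close.
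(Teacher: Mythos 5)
The paper does not prove this lemma itself --- it imports it from \cite{CCDK23} --- so there is no internal proof to compare against; judged on its own terms, your induction has a genuine gap. The gap is in the sentence ``A quick check shows that the resulting profile on $G-v$ is exactly $(\deg_{G-v}(w)+1)_w$.'' Since $v$ has maximum degree, $|L(v)|=\deg(v)+1\geq \deg(u)+1=|L(u)|$ for each neighbour $u$, so the matching $M_{vu}$ has at most $|L(u)|$ edges and therefore \emph{cannot} saturate $L(v)$ whenever $\deg(v)>\deg(u)$ (and it need not saturate anything at all, since a correspondence-cover only requires $M_{vu}$ to be \emph{a} matching, possibly empty). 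For every $x\in L(v)$ with no partner in $L(u)$ the list $L_x(u)$ keeps size $\deg(u)+1=\deg_{G-v}(u)+2$, so the induction hypothesis with exact profile does not apply; and one cannot simply restore the profile by deleting an extra colour, because a list of size $s+1$ cannot have all marginals $\geq 1/s$ (they sum to $1$), so the deleted colour loses too much. Consequently your key estimate --- that each of the $\deg(v)$ surviving choices of $x$ contributes conditional probability at least $1/\deg(u)$ --- is only justified when every $x\neq x_0$ is matched into $L(u)$, i.e.\ when $\deg(u)=\deg(v)$. In the opposite extreme (say $M_{vu}$ consists of the single edge $x_0y$), the best available conditional bound for $x\neq x_0$ is $1/(\deg(u)+1)$, and the average $\frac{\deg(v)}{\deg(v)+1}\cdot\frac{1}{\deg(u)+1}$ falls strictly below the target $\frac{1}{\deg(u)+1}$.

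The strategy is salvageable with two additions. First, reduce to the case where every $M_{vu}$ is a \emph{maximum} matching (adding edges to $H$ only makes the task harder); this is the same normalisation used in the proof of \cref{lem:degeneracy_weightedflexibilty}. Second, strengthen the induction hypothesis to covers with $|L(w)|\geq \deg(w)+1$ and marginals $\geq 1/|L(w)|$ (equivalently, invoke the monotonicity of \cref{lem:monotonicityfractionalpacking} to handle the oversized lists at neighbours $u$ for the unmatched choices of $x$). Then for a matched colour $y\in L(u)$ one gets the weighted average
\[
\mathbb{P}(c(u)=y)\;\geq\;\frac{1}{\deg(v)+1}\left(\frac{\deg(u)}{\deg(u)}+\frac{\deg(v)-\deg(u)}{\deg(u)+1}\right)\;=\;\frac{1}{\deg(u)+1},
\]
where the first term accounts for the $\deg(u)$ matched choices $x\neq x_0$ (list shrinks to $\deg_{G-v}(u)+1$) and the second for the $\deg(v)-\deg(u)$ unmatched choices (list stays one larger). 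Note the identity is exact, which it must be: with lists of size exactly $\deg(w)+1$ all marginals are forced to equal $1/(\deg(w)+1)$, so there is no slack anywhere and every unjustified loss is fatal. Your observation that maximum degree is the right choice of $v$ is correct and is indeed where the argument is tight; what is missing is the bookkeeping that makes the claim about the profiles of the covers $\sH_x$ true.
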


This implies in particular that $\chi_c^{\bullet}(G)$ is bounded from above by the maximum degree plus one. In order to obtain a similar bound in terms of degeneracy, one would be tempted to replace the degree $\deg(v)$ with oriented degree $\deg^+(v)$, with respect to some degeneracy ordering. However, that modified statement is false because by~\cite[Prop.~21]{CCDK23} there exist $d$-degenerate graphs with $\chi_c^{\bullet}(G)\geq d+2$. It is not known whether $\chi_c^{\bullet}(G)\leq d+2$. Does~\cref{lem:localdegreebound} remain true under replacement of $\deg(v)$ with $\deg^+(v)+1$? As a step in that direction, here is our local result that allows for a proof by induction.

\begin{lem}\label{lem:degeneracy_weightedflexibilty}
Let $G$ be a graph equipped with some vertex ordering $v_1,v_2,\ldots,v_n$. Let $\deg^+(v_i)$ denote the number of neighbours $v_j$ of $v_i$ with $j>i$. Then $G$ is weighted $(2^{-(\deg^+(v)+1)})$-flexible with respect to the profile $(\deg^+(v)+2)$. 
\end{lem}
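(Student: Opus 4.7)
The plan is to induct on $n=|V(G)|$, peeling off the last vertex $v_n$ of the ordering (so that $\deg^+(v_n)=0$ and $|L(v_n)|=2$). The base case $n=1$ is trivial: put the uniform distribution on the two colours of $L(v_1)$, giving probability $\tfrac12=2^{-(0+1)}$ to each. The inductive step will use the fact that conditioning on $v_n=x$ costs exactly one colour at each back-neighbour of $v_n$, which is precisely what induction on $G-v_n$ with the induced ordering can absorb.

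Before starting the induction, I would argue that we may assume without loss of generality that the given correspondence-cover $\sH=(L,H)$ has \emph{full} matchings on every edge: if some matching $M_{uv}$ is partial, extend $H$ to $H'$ by adding matching edges until it is full. Any independent transversal of $H'$ is still an independent transversal of $H$, so a flexible distribution for $\sH'$ is one for $\sH$. Under this assumption, for every back-neighbour $v_j$ of $v_n$ the matching $M_{v_j v_n}$ has size $\min(|L(v_j)|,2)=2$. Now write $L(v_n)=\{x_1,x_2\}$ and, for $i\in\{1,2\}$, build a cover $\sH_i=(L^i,H_i)$ of $G-v_n$ by setting
\[
L^i(v_j):=L(v_j)\setminus\{M(x_i,v_j)\}\quad\text{for every }v_j\in N_G(v_n),
\]
and $L^i(v_j):=L(v_j)$ otherwise. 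A direct check shows $|L^i(v_j)|=\deg^+_{G-v_n}(v_j)+2$ for every $v_j\in V(G-v_n)$, so the induction hypothesis applies to $\sH_i$ and yields a distribution $\mu_i$ on independent transversals $I_i$ of $\sH_i$ with $\Pr_{\mu_i}[y\in I_i]\geq 2^{-(\deg^+_{G-v_n}(v_j)+1)}$ for all $v_j$ and $y\in L^i(v_j)$. By construction, no colour of $L^i(v_j)$ is matched to $x_i$ in $H$, so $I_i\cup\{x_i\}$ is a valid independent transversal of $\sH$. Define the distribution $\mu$: flip a fair coin to choose $i\in\{1,2\}$, sample $I_i$ from $\mu_i$, and output $I_i\cup\{x_i\}$.

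What remains is a routine case analysis verifying $\Pr_\mu[z\in I]\geq 2^{-(\deg^+(v)+1)}$ for every $v$ and $z\in L(v)$. At $v=v_n$, each $x_i$ gets probability $\tfrac12=2^{-(\deg^+(v_n)+1)}$. At a vertex $v_j\notin N(v_n)$ with $j<n$, the two sub-distributions already give the right bound since $\deg^+_{G-v_n}(v_j)=\deg^+(v_j)$. The interesting case is $v_j\in N(v_n)$: here $\deg^+_{G-v_n}(v_j)=\deg^+(v_j)-1$, and for a colour $y\in L(v_j)$ matched to some $x_i$ (say $i=1$), $y$ is deleted in $\sH_1$ but not in $\sH_2$, so
\[
\Pr_\mu[y\in I]=\tfrac12\Pr_{\mu_2}[y\in I_2]\geq \tfrac12\cdot 2^{-\deg^+(v_j)}=2^{-(\deg^+(v_j)+1)};
\]
for any $y\in L(v_j)$ not matched to $x_1$ or $x_2$, $y$ survives in both $\sH_i$'s, so the bound is achieved (in fact with a factor $2$ to spare). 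The main obstacle I anticipate is the bookkeeping around partial matchings and the list-size accounting at back-neighbours; the WLOG reduction to full matchings makes both transparent, after which the factor-$2$ loss per induction step aligns exactly with the exponent $\deg^+(v)+1$ in the target bound.
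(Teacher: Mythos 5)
Your proposal is correct and follows essentially the same route as the paper's proof: reduce to maximum matchings, peel off $v_n$ with its size-$2$ list, condition on each of the two colours $x_i$ to get a cover of $G-v_n$ with profile $(\deg^+_{G-v_n}(v)+2)$, apply induction, and mix the two resulting distributions uniformly, with the same three-case analysis at the back-neighbours of $v_n$. No gaps.
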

\begin{proof}
We proceed by induction on $n$, the case $n=1$ being clear because in that case we can take a uniformly random colour from the size-$2$ list of $v_1$.
To prepare for the induction step, let $\sH=(L,H)$ be a correspondence-cover of $G$ with profile $(\deg^+(v)+2)_{v\in V(G)}$.
By adding edges to $H$ if necessary, we may assume that for every $ab\in E(G)$ the matching between $L(a)$ and $L(b)$ is maximum, i.e., of size $\min(|L(a)|,|L(b)|)$.

Since $v_n$ has no neighbours with higher index, we have $|L(v_n)|=2$. Given any $x\in L(v_n)$, let $\mathcal{H}_x:=(L_x,H_x)$ denote the correspondence-cover obtained by deleting $L(v_n)$ and the neighbours of $x$ from $H$.
By induction there exists a probability distribution $\mathbb{P}_x$ on independent transversals $I_x$ of $\sH_x$ such that $\mathbb{P}_x( y \in I_x) \geq (1/2)^{\deg_{G-v_n}^+(u)+1}$ for all $u\in V(G)-v_n$ and $y \in L_x(u)$.
Now fix $u$ a vertex in $G-v_n$ and $y \in L(u)$. We distinguish three cases.
\begin{enumerate}[(i)]
 \item If $u\notin N(v_n)$. Then $L_x(u)=L(u)$ and so $\deg_{G-v_n}^+(u)=\deg^+(u)$, so $\mathbb{P}_x(y \in I_x) \geq (1/2)^{\deg^+(u)+1}$.

\item If $u \in N(v_n)$ and $y \in N_H(x)$. Then $y$ is not in $H_x$, so $\mathbb{P}_x(y \in I_x)=0$. 
\item If $u \in N(v_n)$ but not $y \in N_H(x)$. Then $y\in H_x$ and $\deg_{G-v_n}^+(u)= \deg^+(u)-1$. Therefore $|L_x(u)|=\deg^+(u)+1$ and so $\mathbb{P}_x(y \in I_x) \geq (1/2)^{\deg^+(u)}$.
\end{enumerate}

We now select a uniformly random element $x$ of $L(v_n)$ and conditioned on that choice select (according to $\mathbb P_x$) a random independent transversal $I_x$ of $\sH_x$. The union $I$ of $x$ and $I_x$ is our random independent transversal of $\sH$ that certifies the statement of the lemma. 

Indeed, by construction we have for every $y^{\star}\in L(v_n)$ that $\mathbb{P}(y^{\star} \in I)=\frac{1}{|L(v_n)|}= \frac{1}{2}=(1/2)^{\deg^+(v_n)+1}$, as desired. And for $u$ a vertex in $G-v_n$ and $y \in L(u)$
we have $\mathbb{P}(y \in I)= \sum_{x \in L(v_n)} \mathbb{P}_x(y \in I_x) \cdot \frac{1}{|L(v_n)|}$. 
If $u\notin N(v_n)$, we are in the first case so we obtain $\mathbb{P}(y \in I) \geq (1/2)^{\deg^+(u)+1}$. If $u\in N(v_n)$ and $y \notin N_H(L(v_n))$, every choice of $x$ leads to the third case, so we obtain $\mathbb{P}(y \in I) \geq (1/2)^{\deg^+(u)}$. Finally, if $y \in N_H(L(v_n))$ then with probability $\frac{1}{|L(v_n)|}=\frac{1}{2}$ we have $I_x$ of the form in the second case and with the remaining probability half we are in the third case. So, together we have $\mathbb{P}(y \in I) \geq \frac{1}{2} \cdot 0 + \frac{1}{2} \cdot (1/2)^{\deg^+(u)}= (1/2)^{\deg^+(u)+1}$.
Thus for every choice of $u\in V(G)$ and $y \in L(u)$ the required inequality $\mathbb{P}(y \in I) \geq (1/2)^{\deg^+(u)+1}$ has been verified.
\end{proof}

\begin{cor}
Every $d$-degenerate graph $G$ is weighted $(1/2)^{d+1}$-flexible with respect to every $(d+2)$-fold correspondence-cover.
\end{cor}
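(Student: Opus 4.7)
The plan is to deduce the corollary from \cref{lem:degeneracy_weightedflexibilty} via a random restriction of lists. The main obstacle is that the lemma supplies weighted $(2^{-(\deg^+(v)+1)})$-flexibility only for profiles \emph{equal to} $(\deg^+(v)+2)_v$, whereas the corollary hypothesises the uniformly larger profile $(d+2)_v$; a priori we cannot just apply the lemma to the given cover, since some lists are too big. I plan to bridge this gap by averaging over uniformly chosen sub-lists.

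First I would fix any $d$-degenerate ordering $v_1,\ldots,v_n$ of $G$, so that $\deg^+(v)\le d$ for every $v$. Given an arbitrary $(d+2)$-fold correspondence-cover $\sH=(L,H)$, I would then independently at each vertex $v$ sample a uniformly random subset $\sigma(v)\subseteq L(v)$ of size $\deg^+(v)+2$, and let $\sH_\sigma$ be the sub-cover of $\sH$ obtained by restricting the vertex set of $H$ to $\bigcup_v \sigma(v)$. A direct verification shows that conditions (i)--(iv) of the definition of a correspondence-cover are preserved under this restriction, so $\sH_\sigma$ is a valid correspondence-cover of $G$ with profile exactly $(\deg^+(v)+2)_v$. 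By \cref{lem:degeneracy_weightedflexibilty}, there exists a probability distribution $\Pr_\sigma$ on independent transversals of $\sH_\sigma$ such that $\Pr_\sigma(x\in I)\ge 2^{-(\deg^+(v)+1)}$ for every $v$ and every $x\in\sigma(v)$.

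Next I would construct the desired distribution on independent transversals of $\sH$ by first sampling $\sigma$ and then sampling $I\sim\Pr_\sigma$; since any independent transversal of $\sH_\sigma$ is also an independent transversal of $\sH$, the tower property gives, for every $v$ and every $x\in L(v)$,
\[
\Pr(x\in I)\ \ge\ \Pr\bigl(x\in\sigma(v)\bigr)\cdot 2^{-(\deg^+(v)+1)}\ =\ \frac{\deg^+(v)+2}{d+2}\cdot 2^{-(\deg^+(v)+1)}.
\]
The final and only remaining step is the elementary inequality $(k+2)\cdot 2^{d-k}\ge d+2$ for $0\le k\le d$, which guarantees that the right-hand side above is at least $2^{-(d+1)}$. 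Setting $f(k):=(k+2)\,2^{d-k}$, one has $f(d)=d+2$ and $f(k)/f(k+1)=2(k+2)/(k+3)\ge 1$ for $k\ge -1$, so $f$ is non-increasing on $\{0,\ldots,d\}$ and hence $f(k)\ge f(d)=d+2$, as required.
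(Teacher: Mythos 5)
Your proof is correct and follows essentially the same route as the paper: the paper's own (terser) argument applies \cref{lem:degeneracy_weightedflexibilty} to the profile $(\deg^+(v)+2)$ and concludes via the very inequality $\frac{i+2}{d+2}\,2^{-(i+1)}\ge 2^{-(d+1)}$, whose factor $\frac{i+2}{d+2}$ is exactly the probability of a colour surviving your uniformly random sub-list restriction. You have merely made explicit the averaging-over-sub-covers step that the paper leaves implicit.
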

\begin{proof}
Choose a vertex ordering $v_1, v_2,\ldots v_n$ such that every vertex $v_i$ has at most $d$ neighbours with higher index $j>i$, 
and apply Lemma~\ref{lem:degeneracy_weightedflexibilty} to the profile $(\deg^+(v)+2)_{v\in V(G)}$.
Since $\frac{i+2}{d+2}2^{-(i+1)} \ge 2^{-(d+1)}$ for every $\deg^+(v)=i \le d$, we conclude.
\end{proof}

The fact that planar graphs are $5$-degenerate yields the next corollary.
\begin{cor}
Every planar graph $G$ is weighted $\frac{1}{64}$-flexible with respect to every $7$-fold correspondence-cover.
\end{cor}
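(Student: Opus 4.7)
The plan is to simply invoke the immediately preceding corollary with parameter $d=5$. Indeed, a standard consequence of Euler's formula is that every planar graph has a vertex of degree at most $5$; since this property is inherited by every subgraph of a planar graph (each of which is again planar), the class of planar graphs is $5$-degenerate.

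Applying the corollary on $d$-degenerate graphs with $d=5$ then yields that every planar graph $G$ is weighted $(1/2)^{5+1}$-flexible with respect to every $(5+2)$-fold correspondence-cover, which is exactly the claimed weighted $\frac{1}{64}$-flexibility with respect to every $7$-fold correspondence-cover.

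The only thing to double-check is that the numeric substitution matches: $(1/2)^{d+1}$ with $d=5$ is $(1/2)^6 = \tfrac{1}{64}$, and $d+2=7$. So there is no real obstacle here, the corollary is a direct specialisation, and I would present it in essentially one sentence that first states planar $5$-degeneracy (citing Euler's formula if desired) and then invokes the previous corollary.
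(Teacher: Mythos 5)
Your proposal is correct and matches the paper's argument exactly: the paper also derives this corollary by combining the $5$-degeneracy of planar graphs with the preceding corollary on $d$-degenerate graphs, specialised to $d=5$. The numeric substitution $(1/2)^{5+1}=\frac{1}{64}$ and $5+2=7$ is exactly what is needed.
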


It remains an open problem whether this can be improved further. Are planar graphs weighted $\frac{1}{7}$-flexible with respect to every $7$-fold correspondence-cover? By definition, this would mean that planar graphs have fractional correspondence packing number at most $7$, beating the currently best bound $8$ that was obtained independently by~\cite{CCZ23} and~\cite{CS-R24+}.

\section{Concluding questions}\label{sec:conclusion}

The $n$-dimensional hypercube $Q_n$ is the iterated Cartesian product of $n$ copies of $K_2$. ~\cref{thm:cartesianproductList} and~\cref{cor:cartesianproductwithttree} both imply that $\chi_c^{\bullet}(Q_n)\leq n+1$ . For $n\leq 3$, there is equality $\chi_c^{\bullet}(Q_n)= n+1$; for $n\leq 2$ this is easy to see and for $Q_3$ it follows from the construction in~\cref{fig:hypercube_cover} in~\cref{sec:appendixQ3}. However, since $Q_n$ is bipartite and its vertices have degree $n$, asymptotically $\chi_c^{\bullet}(Q_n)$ must go to $\Theta(n/ \log n)$, by~\cite[Thm.~11]{CCDK21} and~\cite{Ber16}. It is thus clear that for large values of $n$, the current layering arguments are not powerful enough. We propose $Q_n$ as a testing ground to develop more sophisticated techniques.\\ 

The same can be asked for the iterated Cartesian product of $n$ paths or even cycles, for which again the asymptotic value of $\chi_c^{\bullet}$ is $ \Theta(n/ \log n)$.
Kaul et al.~\cite[Prop. 18]{KMMP22} proved that $\chi_{\ell}^{\bullet}(P_a \square P_b)=3$ for the Cartesian product of any two paths $P_a$ and $P_b$. With~\cref{cor:cartesianproductwithttree}, we find that the same equality $\chi_{\ell}^{\bullet}=\chi_{c}^{\bullet}=3$ holds for the Cartesian product of any two nontrivial trees. Similarly, since $\chi_c^{\bullet}(Q_3)=4$, we have $\chi_{c}^{\bullet}=4$ for the Cartesian product of any three nontrivial trees. Because $\chi_c^{\bullet}(Q_n)$ grows sublinearly, this pattern cannot continue: for large enough $n$ it ceases to be true that $\chi_{c}^{\bullet}=n+1$ for the Cartesian product $T_1 \square T_2 \square\ldots \square T_n$ of every $n$ nontrivial trees $T_1,T_2\ldots, T_n$. However,~\cref{cor:cartesin_for_n_trees} shows that $n+1$ is still a sharp upper bound, since it is attained by \emph{some} collection of $n$ trees. In the proof, we tacitly used that $\chi_{\ell}$ and $\chi_c^{\bullet}$ are equal for trees, but for graphs other than trees, we cannot leverage that trick anymore. Therefore we pose the $\chi_c^{\bullet}$-analogue of~\cref{thm:cartesianproductList} as a question.

 \begin{q}\label{q:cart_construction}
Given an arbitrary graph $A$ and integer $b\geq 1$, does there exist a graph $B$ with $\chi_c^{\bullet}(B)=b$ such that $\chi_c^{\bullet} (A \square B) \geq \chi_c^{\bullet}(A) + b - 1$ ?
 \end{q}

In \cite[Thm.~4]{KMSS23} this was confirmed for $\chi_c$ instead of $\chi_c^{\bullet}$, and for $B$ a complete bipartite graph $K_{b-1,t}$ with $t$ sufficiently large. By~\cite[Prop. 22]{CCDK23}, we have $\chi_{c}^{\bullet}(K_{b-1,t})=b$. These two facts together imply that the answer to~\cref{q:cart_construction} is yes for all graphs $A$ with $\chi_c(A)=\chi_c^{\bullet}(A)$. For other graphs $A$, we believe~\cref{q:cart_construction} to be difficult. It does not seem to be feasible to emulate the proofs from~\cite{BJKM06,KMSS23}.\\

With Theorems~\ref{thm:treedepth} and~\ref{thm:pathwidth}, we have determined the optimal bounds in terms of treedepth and pathwidth. These are tight due to for instance complete bipartite graphs. The famous cousin of treedepth and pathwidth is treewidth. 
In~\cite{BMS22} and~\cite{CCZ23} it was shown that $\chi_{\ell}^{\bullet}(G)\leq \mathrm{tw}(G)+1$ and $\chi_c^{\bullet}(G)\leq \mathrm{tw}(G)+2$ when $\mathrm{tw}(G) = 2$, and that this is sharp.
For this work we have verified that there is a graph $G$ with $\mathrm{tw}(G)=3$ and $\chi_c^{\bullet}(G)\geq 5$, using a linear program with as variables weights in the interval $[0,1]$ that are assigned to the correspondence-colourings of a $4$-fold cover of $K_4$, and with constraints based on Hall's Matching's theorem.
The example could subsequently be reduced to a smaller example $G$ of order $14$, for which the verification could be done immediately. The latter can be found in~\cref{app:progress_upperbounds_chicbullet_intermsof_pwtw}.

\begin{q}\label{q:upperbounds_chicbullet_intermsof_pwtw}
 Is it true that for every $t\geq 2$, there exists a graph $G$ with treewidth $t$ and $\chi_c^{\bullet}(G)= t+2$?
 Is $\chi_c^{\bullet}(G)\le \mathrm{tw}(G)+2$ for every graph $G$? Is $\chi_\ell^{\bullet}(G)\le \mathrm{tw}(G)+1$ for every graph $G$?
\end{q}

A word of caution: 
while we established  $\chi_c^{\bullet}\leq \mathrm{pw}+1$,
we have not yet excluded the possibility that there exist graphs $G$ with arbitrarily large treewidth and $\chi_c^{\bullet}(G)$ as large as $2\cdot \mathrm{tw}(G)$.
Recall that the suggested upper bound in~\cref{q:upperbounds_chicbullet_intermsof_pwtw} is false for the (non-fractional) correspondence packing number, as there exist $d$-degenerate complete bipartite graphs with $\chi_c^{\star}(G)=2d=2\cdot \mathrm{tw}=2\cdot \mathrm{pw}=2\cdot (\mathrm{td}-1)$. \\

Since the width-parameters are usually associated with complexity questions, it is natural and interesting to pose the question for the various packing numbers as well.

\begin{q}\label{q:tw&compl}
Fix $t>1$. Is the problem of determining the value of $\chi^\star_\ell(G)$ or $\chi^\bullet_\ell(G)$, given a graph $G$ of treewidth at most $t$ as input, in $P$?
\end{q}

This is true for $\chi$ and $\chi_{\ell}$~\cite{FFLRSST11,AP89}, but as alluded to in the introduction, the (fractional) packing numbers exhibit algorithmic oddities and challenges. One can ask the same for treedepth or pathwidth instead of treewidth. \\

\textbf{Note added}\\
After the acceptance of this manuscript, Kashima, Maezawa and Zhu~\cite{KMZ26} showed that for every $t\ge 2$ there is a graph $G$ with treewidth $t$ and $\chi_{\ell}^{\star}(G) \ge t+2$. They also resolved Question~\ref{q:tw&compl} for $\chi_{\ell}^{\star}(G)$.

\section*{Acknowledgement}

We thank Ross J. Kang for discussions.

\bibliographystyle{habbrv}
\bibliography{listpack}

\section*{Appendix}\label{sec: appendix}
\appendix
\section{The hypercube $Q_3$}\label{sec:appendixQ3}

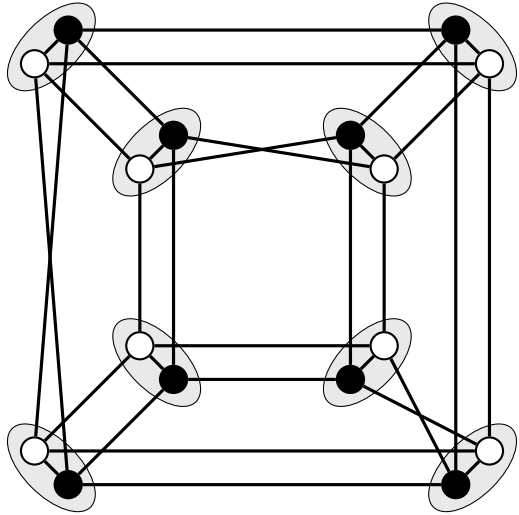
\begin{figure}[H]
\centering
\tikzmath{\scale=0.7; \dt = \scale* 0.32;\x= \scale* 2; \ellX=\scale* 30; \ellY=\scale* 15;}
\begin{tikzpicture}

\draw[fill=gray!17, rotate around={-45:(\x,\x)}] (\x,\x) ellipse (\ellX pt and \ellY pt);
\draw[fill=gray!17, rotate around={45:(\x,-\x)}] (\x,-\x) ellipse (\ellX pt and \ellY pt);
\draw[fill=gray!17, rotate around={135:(-\x,-\x)}] (-\x,-\x) ellipse (\ellX pt and \ellY pt);
\draw[fill=gray!17, rotate around={-135:(-\x,\x)}] (-\x,\x) ellipse (\ellX pt and \ellY pt);
\draw[fill=gray!17, rotate around={-45:(2*\x,2*\x)}] (2*\x,2*\x) ellipse (\ellX pt and \ellY pt);
\draw[fill=gray!17, rotate around={45:(2*\x,-2*\x)}] (2*\x,-2*\x) ellipse (\ellX pt and \ellY pt);
\draw[fill=gray!17, rotate around={135:(-2*\x,-2*\x)}] (-2*\x,-2*\x) ellipse (\ellX pt and \ellY pt);
\draw[fill=gray!17, rotate around={-135:(-2*\x,2*\x)}] (-2*\x,2*\x) ellipse (\ellX pt and \ellY pt);

\definecolor{weight0}{rgb}{1,0.4,0}
\definecolor{weight1}{rgb}{1,1,1}
\definecolor{weight2}{rgb}{0,0,0}
\begin{scope}[every node/.style={circle,thick,draw}]
\node (00) [fill=weight1] at (\x+\dt,\x-\dt) {};
\node (01) [fill=weight2] at (\x-\dt,\x+\dt) {};
\node (10) [fill=weight1] at (\x+\dt,-\x+\dt) {};
\node (11) [fill=weight2] at (\x-\dt,-\x-\dt) {};
\node (20) [fill=weight1] at (-\x-\dt,-\x+\dt) {}; 
\node (21) [fill=weight2] at (-\x+\dt,-\x-\dt) {}; 
\node (30) [fill=weight1] at (-\x-\dt,\x-\dt) {}; 
\node (31) [fill=weight2] at (-\x+\dt,\x+\dt) {}; 

\node (40) [fill=weight1] at (2*\x+\dt,2*\x-\dt) {};
\node (41) [fill=weight2] at (2*\x-\dt,2*\x+\dt) {};
\node (50) [fill=weight1] at (2*\x+\dt,-2*\x+\dt) {};
\node (51) [fill=weight2] at (2*\x-\dt,-2*\x-\dt) {};
\node (60) [fill=weight1] at (-2*\x-\dt,-2*\x+\dt) {}; 
\node (61) [fill=weight2] at (-2*\x+\dt,-2*\x-\dt) {}; 
\node (70) [fill=weight1] at (-2*\x-\dt,2*\x-\dt) {}; 
\node (71) [fill=weight2] at (-2*\x+\dt,2*\x+\dt) {};

\end{scope}

\begin{scope}[
 every node/.style={fill=white,circle},
 every edge/.style={draw=black,very thick}]
 \path (00) edge (01); \path (10) edge (11); \path (20) edge (21); \path (30) edge (31); \path (40) edge (41); \path (50) edge (51); \path (60) edge (61); \path (70) edge (71); 

\path (00) edge (10);
\path (01) edge (11);
\path (10) edge (20);
\path (11) edge (21);
\path (20) edge (30);
\path (21) edge (31);
\path (30) edge (01);
\path (31) edge (00);

\path (40) edge (50);
\path (41) edge (51);
\path (50) edge (60);
\path (51) edge (61);
\path (60) edge (71);
\path (61) edge (70);
\path (70) edge (40);
\path (71) edge (41);

\path (00) edge (40);
\path (01) edge (41);
\path (10) edge (51);
\path (11) edge (50);
\path (20) edge (60);
\path (21) edge (61);
\path (30) edge (70);
\path (31) edge (71);
\end{scope}
\end{tikzpicture}
 \caption{A $2$-fold correspondence-cover $(H,L)$ of the hypercube $Q_3$, demonstrating that $\chi_{c}^{\bullet}(Q_3) >3$. 
 For each vertex $v$ of $Q_3$, its list $L(v)=\{1_v,2_v\}$ is indicated by a grey ellipse surrounding a white dot representing $1_v$ and a black dot representing $2_v$.
 For most edges $uv$ of $Q_3$, the matching between $L(u)$ and $L(v)$ is the `identity matching', consisting of edges $1_u 1_v$ and $2_u 2_v$ in $H$. The exceptions are formed by a matching of $Q_3$ that consists of three \emph{special edges} $uv$ for which the matching between $L(u)$ and $L(v)$ is `crossing', i.e., consisting of $1_u 2_v$ and $2_u 1_v$. The special edges are chosen such that every induced cycle $C$ of $Q_3$ traverses an odd number of them. Therefore the restriction of $(H,L)$ to an induced cycle $C$ of $Q_3$ cannot contain any independent set on $|V(C)|$ vertices. 
 From this it easily follows that every independent set of the $16$-vertex graph $H$ has size at most $5$. Hence the fractional chromatic number of $H$ is at least $16/5$, which is strictly larger than three.
}
 \label{fig:hypercube_cover}
\end{figure}

\section{A treewidth three graph with $\chi_c^{\bullet}=5$. Progress for~\cref{q:upperbounds_chicbullet_intermsof_pwtw}}\label{app:progress_upperbounds_chicbullet_intermsof_pwtw}

A subcover $(L,H)$ of a $4$-fold cover of a $3$-tree $G$ with $\chi_c^\bullet(G) \ge 5$ is presented in~\cref{fig:treewidth3_chicbullet5} and~\cref{fig:treewidth3_chicbullet5_cover}. Note that $G$ has treewidth three, so this constitutes some progress towards~\cref{q:upperbounds_chicbullet_intermsof_pwtw}. 
The cover graph $H$ has fractional chromatic number $4+1/2092$. Since this is strictly larger than $4$, it implies that $\chi_c^\bullet(G) \ge 5$. We also list the edges of $H$, so that interested readers can verify the statement with a simple computer check. There are $45$ vertices and the edges are described by the following list of adjacencies, where for example a:[b c d] means that vertex $a$ is adjacent to $b,c$ and $d$.\\
1: [2 3 4 6 9 13 23 29 32 35 43],
2: [3 4 7 10 14 17 24 30 33 37],
3: [4 5 11 15 18 38 44],
4: [8 12 16 19 25 31 34 36 39 45],
5: [6 7 8 9 13 20 24 30 33 37 44],
6: [7 8 10 14 23 26 29 32 38 40],
7: [8 11 15 21 27 41 43],
8: [12 16 22 25 28 31 34 39 42 45],
9: [10 11 12 13 17 26 37 41 44],
10: [11 12 14 20 30 35 43],
11: [12 15 18 21 27 29 38 40],
12: [16 19 22 28 31 36 39 42 45],
13: [14 15 16 17 24 26 41],
14: [15 16 20 27 33 40],
15: [16 18 21 23 32 35],
16: [19 22 25 28 34 36 42],
17: [18 19],
18: [19],
20: [21 22],
21: [22],
23: [24 25],
24: [25],
26: [27 28],
27: [28],
29: [30 31],
30: [31],
32: [33 34],
33: [34],
35: [36],
37: [38 39],
38: [39],
40: [41 42],
41: [42],
43: [44 45],
44: [45].
\\


A fractional clique certifying that $\chi_c^{\bullet}(G)\geq 5$ is given by the following vertex weights. The sum of all the weights is $4+1/2092$, while on each independent set the sum of the weights is at most one. Here for example a:[b c d] means that weight $a/4148$ is assigned to each of the vertices $b,c$ and $d$. \\
2:  [26 27 28],         
18: [36 37],           
28: [40 41 42],        
32: [43],
70: [37 38 39],
86: [44 45],
102: [23],
168: [25 26],
200: [33 34 35],
282: [29],
293: [20 21 22],
310: [30 31],
323: [17 18 19],
447: [9],
491: [13],
493: [10],
511: [14],
614: [7],
689: [5],
700: [1 3 6],
703: [2],
786: [11],
804: [15],
1182: [8],
1190: [16],
1200: [4 12].

\begin{figure}[ht]
\centering
 \begin{minipage}{\linewidth}
 \centering
 \begin{minipage}{0.475\linewidth}
 \centering
 \tikzmath{\scale=0.475; \dt = \scale* 1;\x= \scale* 2; \ex= 2.5*\x; \ellX=\scale* 30; \ellY=\scale* 15;}
 \begin{tikzpicture} [rotate=0]
 \definecolor{weight0}{rgb}{1,0.4,0}
 \definecolor{weight1}{rgb}{1,1,1}
 \definecolor{weight2}{rgb}{0,0,0}
 \begin{scope}[every node/.style={circle,thick,draw}]
 \node (0) [fill=weight2] at (\x,\x) {};
 \node (1) [fill=weight2] at (-\x,\x) {};
 \node (2) [fill=weight2] at (-\x,-\x) {};
 \node (3) [fill=weight2] at (\x,-\x) {};
 
 \node (4) [fill=weight2] at (\ex+\dt,\ex-\dt) {};
 \node (5) [fill=weight2] at (\ex,\ex) {};
 \node (6) [fill=weight2] at (\ex-\dt,\ex+\dt) {};
 
 \node (7) [fill=weight2] at (-\ex-\dt,\ex-\dt) {};
 \node (8) [fill=weight2] at (-\ex+\dt,\ex+\dt) {};

 \node (9) [fill=weight2] at (-\ex-\dt,-\ex+\dt) {};
 \node (10) [fill=weight2] at (-\ex,-\ex) {};
 \node (11) [fill=weight2] at (-\ex+\dt,-\ex-\dt) {};
 
 \node (12) [fill=weight2] at (\ex+\dt,-\ex+\dt) {};
 \node (13) [fill=weight2] at (\ex-\dt,-\ex-\dt) {};

 \end{scope}

 \begin{scope}[
 every node/.style={fill=white,circle},
 every edge/.style={draw=black,very thick}]
 \path (0) edge (1);
 \path (1) edge (2);
 \path (2) edge (3);
 \path (3) edge (0);
 \path(0) edge (2);
 \path(1) edge (3);

 every edge/.style={draw=black, thick}]
 \path (4) edge (0); \path (4) edge (1); \path (4) edge (3);
 \path (5) edge (0); \path (5) edge (1); \path (5) edge (3);
 \path (6) edge (0); \path (6) edge (1); \path (6) edge (3);
 \path (7) edge (0); \path (7) edge (1); \path (7) edge (2);
 \path (8) edge (0); \path (8) edge (1); \path (8) edge (2);
 \path (9) edge (1); \path (9) edge (2); \path (9) edge (3);
 \path (10) edge (1); \path (10) edge (2); \path (10) edge (3);
 \path (11) edge (1); \path (11) edge (2); \path (11) edge (3);
 \path (12) edge (0); \path (12) edge (2); \path (12) edge (3);
 \path (13) edge (0); \path (13) edge (2); \path (13) edge (3);
 
 \end{scope}
 \end{tikzpicture}
 \end{minipage}
 \centering
\end{minipage}
\caption{A graph $G$ with treewidth three and $\chi_c^{\bullet}(G) =5$.
 }\label{fig:treewidth3_chicbullet5}
\end{figure}
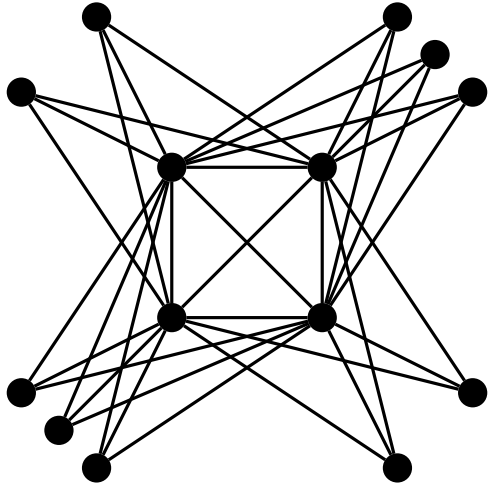

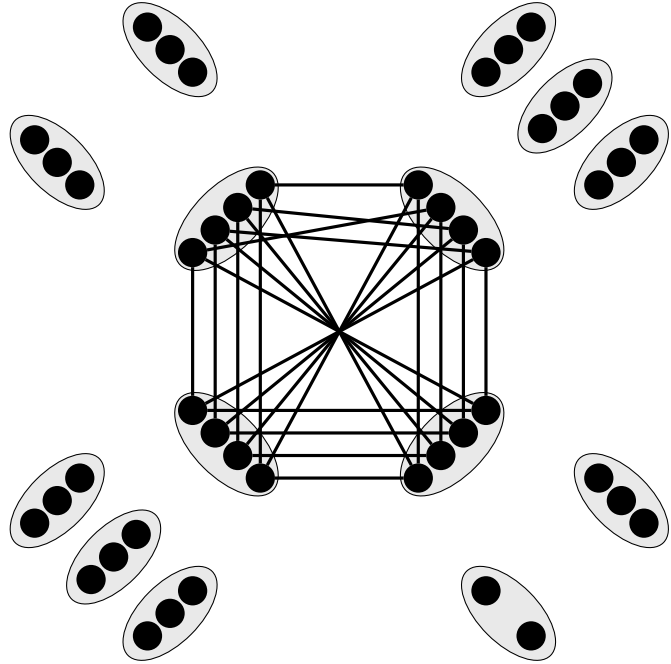
\begin{figure}[ht]
\centering
 \begin{minipage}{0.475\linewidth}
 \centering
 \tikzmath{\scale=0.75; \dt = \scale* 0.2;\x= \scale* 2; \b=\scale *1;\ellX=\scale* 30; \ellY=\scale* 15;}
 \begin{tikzpicture} [rotate=0]
 
 \draw[fill=gray!17, rotate around={-45:(\x,\x)}] (\x,\x) ellipse (1.1*\ellX pt and 1.1*\ellY pt);
 \draw[fill=gray!17, rotate around={45:(\x,-\x)}] (\x,-\x) ellipse (1.1*\ellX pt and 1.1*\ellY pt);
 \draw[fill=gray!17, rotate around={135:(-\x,-\x)}] (-\x,-\x) ellipse (1.1*\ellX pt and 1.1*\ellY pt);
 \draw[fill=gray!17, rotate around={-135:(-\x,\x)}] (-\x,\x) ellipse (1.1*\ellX pt and 1.1*\ellY pt);
 
 \draw[fill=gray!17, rotate around={45:(2*\x,2*\x)}] (2*\x,2*\x) ellipse (\ellX pt and \ellY pt);
 \draw[fill=gray!17, rotate around={45:(2*\x+\b,2*\x-\b)}] (2*\x+\b,2*\x-\b) ellipse (\ellX pt and \ellY pt);
 \draw[fill=gray!17, rotate around={45:(2*\x-\b,2*\x+\b)}] (2*\x-\b,2*\x+\b) ellipse (\ellX pt and \ellY pt);
 
 \draw[fill=gray!17, rotate around={45:(-2*\x,-2*\x)}] (-2*\x,-2*\x) ellipse (\ellX pt and \ellY pt);
 \draw[fill=gray!17, rotate around={45:(-2*\x-\b,-2*\x+\b)}] (-2*\x-\b,-2*\x+\b) ellipse (\ellX pt and \ellY pt);
 \draw[fill=gray!17, rotate around={45:(-2*\x+\b,-2*\x-\b)}] (-2*\x+\b,-2*\x-\b) ellipse (\ellX pt and \ellY pt);

 \draw[fill=gray!17, rotate around={135:(-2*\x-\b,2*\x-\b)}] (-2*\x-\b,2*\x-\b) ellipse (\ellX pt and \ellY pt);
 \draw[fill=gray!17, rotate around={135:(-2*\x+\b,2*\x+\b)}] (-2*\x+\b,2*\x+\b) ellipse (\ellX pt and \ellY pt);
 
 \draw[fill=gray!17, rotate around={135:(2*\x+\b,-2*\x+\b)}] (2*\x+\b,-2*\x+\b) ellipse (\ellX pt and \ellY pt);
 \draw[fill=gray!17, rotate around={135:(2*\x-\b,-2*\x-\b)}] (2*\x-\b,-2*\x-\b) ellipse (\ellX pt and \ellY pt);
 
 \definecolor{weight0}{rgb}{1,0.4,0}
 \definecolor{weight1}{rgb}{1,1,1}
 \definecolor{weight2}{rgb}{0,0,0}
 \begin{scope}[every node/.style={circle,thick,draw}]
 
 \node(0) [fill=weight2] at (\x+3*\dt, \x-3*\dt) {};
 \node(1) [fill=weight2] at (\x+\dt, \x-\dt) {};
 \node(2) [fill=weight2] at (\x-\dt,\x+\dt) {};
 \node(3) [fill=weight2] at (\x-3*\dt,\x+3*\dt) {};
 \node(4) [fill=weight2] at (-\x-3*\dt, \x-3*\dt) {};
 \node(5) [fill=weight2] at (-\x-\dt, \x-\dt) {};
 \node(6) [fill=weight2] at (-\x+\dt,\x+\dt) {};
 \node(7) [fill=weight2] at (-\x+3*\dt,\x+3*\dt) {};
 
 \node(8) [fill=weight2] at (\x+3*\dt, -\x+3*\dt) {};
 \node(9) [fill=weight2] at (\x+\dt, -\x+\dt) {};
 \node(10) [fill=weight2] at (\x-\dt,-\x-\dt) {};
 \node(11) [fill=weight2] at (\x-3*\dt,-\x-3*\dt) {};
 \node(12) [fill=weight2] at (-\x-3*\dt, -\x+3*\dt) {};
 \node(13) [fill=weight2] at (-\x-\dt, -\x+\dt) {};
 \node(14) [fill=weight2] at (-\x+\dt,-\x-\dt) {};
 \node(15) [fill=weight2] at (-\x+3*\dt,-\x-3*\dt) {};
 
 \node(16) [fill=weight2] at (2*\x-2*\dt, 2*\x-2*\dt) {};
 \node(17) [fill=weight2] at (2*\x, 2*\x) {};
 \node(18) [fill=weight2] at (2*\x+2*\dt, 2*\x+2*\dt) {};
 \node(19) [fill=weight2] at (2*\x-2*\dt-\b, 2*\x-2*\dt+\b) {};
 \node(20) [fill=weight2] at (2*\x-\b, 2*\x+\b) {};
 \node(21) [fill=weight2] at (2*\x+2*\dt-\b, 2*\x+2*\dt+\b) {};
 \node(22) [fill=weight2] at (2*\x-2*\dt+\b, 2*\x-2*\dt-\b) {};
 \node(23) [fill=weight2] at (2*\x+\b, 2*\x-\b) {};
 \node(24) [fill=weight2] at (2*\x+2*\dt+\b, 2*\x+2*\dt-\b) {};

 \node(25) [fill=weight2] at (-2*\x+2*\dt, -2*\x+2*\dt) {};
 \node(26) [fill=weight2] at (-2*\x, -2*\x) {};
 \node(27) [fill=weight2] at (-2*\x-2*\dt, -2*\x-2*\dt) {};
 \node(28) [fill=weight2] at (-2*\x+2*\dt+\b, -2*\x+2*\dt-\b) {};
 \node(29) [fill=weight2] at (-2*\x+\b, -2*\x-\b) {};
 \node(30) [fill=weight2] at (-2*\x-2*\dt+\b, -2*\x-2*\dt-\b) {};
 \node(31) [fill=weight2] at (-2*\x+2*\dt-\b, -2*\x+2*\dt+\b) {};
 \node(32) [fill=weight2] at (-2*\x-\b, -2*\x+\b) {};
 \node(33) [fill=weight2] at (-2*\x-2*\dt-\b, -2*\x-2*\dt+\b) {};

 \node(34) [fill=weight2] at (-2*\x+2*\dt-\b, 2*\x-2*\dt-\b) {};
 \node(35) [fill=weight2] at (-2*\x-\b, 2*\x-\b) {};
 \node(36) [fill=weight2] at (-2*\x-2*\dt-\b, 2*\x+2*\dt-\b) {};
 \node(37) [fill=weight2] at (-2*\x+2*\dt+\b, 2*\x-2*\dt+\b) {};
 \node(38) [fill=weight2] at (-2*\x+\b, 2*\x+\b) {};
 \node(39) [fill=weight2] at (-2*\x-2*\dt+\b, 2*\x+2*\dt+\b) {};
 
 \node(34) [fill=weight2] at (2*\x-2*\dt+\b, -2*\x+2*\dt+\b) {};
 \node(35) [fill=weight2] at (2*\x+\b, -2*\x+\b) {};
 \node(36) [fill=weight2] at (2*\x+2*\dt+\b, -2*\x-2*\dt+\b) {};
 \node(37) [fill=weight2] at (2*\x-2*\dt-\b, -2*\x+2*\dt-\b) {};
 \node(39) [fill=weight2] at (2*\x+2*\dt-\b, -2*\x-2*\dt-\b) {};
 \end{scope}
 
 \begin{scope}[
 every node/.style={fill=white,circle},
 every edge/.style={draw=black,very thick}]
 \path (0) edge (5);
 \path (0) edge (8);
 \path (0) edge (12);
 \path (1) edge (6);
 \path (1) edge (9);
 \path (1) edge (13);
 \path (2) edge (4);
 \path (2) edge (10);
 \path (2) edge (14);
 \path (3) edge (7); 
 \path (3) edge (11);
 \path (3) edge (15);
 \path (4) edge (8);
 \path (4) edge (12);
 \path (5) edge (9);
 \path (5) edge (13);
 \path (6) edge (10);
 \path (6) edge (14);
 \path (7) edge (11);
 \path (7) edge (15);
 \path (8) edge (12);
 \path (9) edge (13);
 \path (10) edge (14);
 \path (11) edge (15);
 \end{scope}
 \end{tikzpicture} 
 \end{minipage}

\caption{A correspondence-cover $(L,H)$ of the graph $G$ in~\cref{fig:treewidth3_chicbullet5}. The covergraph $H$ has fractional chromatic number $>4$ and all lists of size $\leq 4$, thus certifying that $\chi_c^{\bullet}(G) >4$. The grey ellipses indicate the lists. To avoid clutter, only the matchings on the central $K_4$ of $G$ are depicted.
 }\label{fig:treewidth3_chicbullet5_cover}
\end{figure}

\end{document}